\DeclareFontFamily{OT1}{pzc}{}
\DeclareFontShape{OT1}{pzc}{m}{it}{<-> s * [0.900] pzcmi7t}{}
\DeclareMathAlphabet{\mathscr}{OT1}{pzc}{m}{it}
\newcommand{\Figref}[1]{Fig.~\ref{#1}}
\newcommand{\Thmref}[1]{Theorem~\ref{#1}}
\newcommand{\Corref}[1]{Corollary~\ref{#1}}
\newcommand{\Lemref}[1]{Lemma~\ref{#1}}
\newcommand{\Propref}[1]{Prop.~\ref{#1}}
\newcommand{\Defref}[1]{Definition~\ref{#1}}
\newcommand{\R}{\mathbb{R}} 
\newcommand{\Z}{\mathbb{Z}} 
\newcommand{\Q}{\mathbb{Q}} 
\newcommand{\Prob}{\mathbb{P}} 
\newcommand{\e}{\epsilon}
\newcommand{\w}{\omega}
\newcommand{\W}{\Omega}
\newcommand{\grad}{\nabla} 
\newcommand{\AND}{\textrm{ and }}
\newcommand{\Var}{\operatorname{Var}}
\newcommand{\Cov}{\operatorname{Cov}}
\newcommand{\almostsurely}{\textrm{a.s}}
\newcommand{\suchthat}{\textrm{s.t}}
\renewcommand{\liminf}{\varliminf}
\newcommand{\Exp}{\operatorname{Exp}}
\newcommand{\Uniform}{\operatorname{Uniform}}
\newcommand{\Bernoulli}{\operatorname{Bernoulli}}
\newcommand{\Lognormal}{\operatorname{Lognormal}}
\newcommand{\E}{\mathbb{E}}
\renewcommand{\emph}[1]{\textbf{#1}}
\theoremstyle{plain}
\newtheorem{theorem}{Theorem}[section]
\newtheorem*{theorem*}{Theorem}
\newtheorem{lemma}[theorem]{Lemma}
\newtheorem*{lemma*}{Lemma}
\newtheorem{cor}[theorem]{Corollary}
\newtheorem{prop}[theorem]{Proposition}
\newtheorem*{prop*}{Proposition}
\theoremstyle{definition}
\newtheorem{define}[theorem]{Definition}
\newtheorem{question}{Question}
\newtheorem*{question*}{Question}
\newtheorem*{example*}{Example}
\theoremstyle{remark}
\newtheorem{remark}{Remark}
\newtheorem*{remark*}{Remark}
\title{Negative correlation of adjacent Busemann increments}
\author[I.~Alevy]{Ian Alevy}
\address{Ian Alevy\\ University of Rochester\\ Mathematics Department\\ Hylan 1008\\   Rochester, NY 14627\\ USA.}
\email{ian.alevy@rochester.edu}
\urladdr{https://people.math.rochester.edu/faculty/ialevy/}
\author[A.~Krishnan]{Arjun Krishnan}
\address{Arjun Krishnan\\ University of Rochester\\ Mathematics Department\\ Hylan 817\\   Rochester, NY 14627\\ USA.}
\email{arjunkc@gmail.com}
\urladdr{https://people.math.rochester.edu/faculty/akrish11/}
\date{February 10, 2021}
\subjclass[2020]{60K35, 60K37} 
\keywords{Busemann function, negative correlation criterion, time-constant domination, large deviation rate-function}
\def \fpp {first-passage percolation}
\def \lpt {last-passage time}
\def \lpp {last-passage percolation}
\def \limitshape {\mathcal{B}}
\def \derivatives {\mathcal{D}}
\def \grads {\mathcal{G}}
\def \indexset {\mathcal{K}}
\def \good {\operatorname{GOOD}}
\def \path {\operatorname{PATH}}
\def \pathp {\operatorname{PATH}^\prime}
\def \antidiaglines {\mathcal{L}}
\newcommand{\oldarjun}[1]{\opt{oldnotes}{\todo[size=\tiny,backgroundcolor=white,noline]{\textcolor{Mahogany}{arjun: #1}}}}
\newcommand{\gexp}[1][]{g_{\text{Exp}}}
\newcommand{\directions}{\mathcal{U^{\circ}}}
\begin{document}

\begin{abstract}
We consider i.i.d.\ \lpp{} on $\Z^2$ with weights having distribution $F$ and time-constant $g_F$. We provide an explicit condition on the large deviation rate function for independent sums of $F$ that determines when some adjacent Busemann function increments are negatively correlated. As an example, we prove that $\Bernoulli(p)$ weights for $p > p^* \approx 0.6504$ satisfy this condition.  We prove this condition by establishing a direct relationship between the negative correlations of adjacent Busemann increments and the dominance of the time-constant $g_F$ by the function describing the time-constant of \lpp{} with exponential or geometric weights.
\end{abstract}

\maketitle

\setcounter{tocdepth}{1}
\tableofcontents

\section{Introduction}

\textbf{Directed last passage percolation} (LPP) is a growth model on a directed graph with random edge or vertex weights. In this paper we focus on the directed nearest-neighbor lattice graph \(\Z^2\), with non-negative i.i.d.\ vertex weights $\{\w_x\}_{x \in \Z^2}$. We say that \(x \prec  y\) if  $x_i \leq y_i$ for $i = 1,2$, in which case \(x\) and \(y\) can be connected by an \textbf{up/right path}: this is a sequence of vertices \(\Gamma = \{x=x_0,x_1, \ldots, x_k=y\}\) in which each step is either right or up; i.e., \(x_{i+1}-x_{i}\in\{e_1,e_2\}\), the canonical unit directions in $\Z^2$. The \textbf{passage-time} of \(\Gamma\) is 
$$
    W(\Gamma) = \sum_{x \in \Gamma} \w_x.
$$

The \textbf{last-passage time} from $x$ to $y$ \((x \prec y)\) is
\begin{align}\label{eq:passage time definition}
    G(x,y) = \sup_{\Gamma} W(\Gamma),
\end{align}
where the supremum is over all up/right paths from $x$ to $y$. A \textbf{geodesic} is an up/right path between points \(x\) and \(y\) that achieves the supremum in \eqref{eq:passage time definition}. An \textbf{infinite geodesic} is an up/right path that is a geodesic between any two points on it.

If $\w_x$ is in $L^1$, the classical subadditive ergodic theorem ensures the existence of the so-called \emph{time-constant} \cite{MR1258174,MR1241039}, which is the limit 
$$
    \lim_{n \to \infty} \frac{G(0,[nx])}{n} = g_F(x) \quad \almostsurely \AND \text{in } L^1
$$
for each $x \in \R_{\geq 0}^2$, where $[y]$ is the only lattice point in $[y,y+1)^2$. The time-constant is a 1-homogeneous, concave function which respects the symmetries of the lattice (e.g.\ $g(x,y) = g(y,x)$). If \(E[w_x^{2+\epsilon}]<\infty\) for some $\e > 0$, then \(g(x)\) is finite for all \(x\in \R^2_{\geq 0}\) \citep{MR1241039,MR1258174}; see also \cite{martin2002} for a slightly weaker sufficient condition. The \textbf{limit-shape} is the level set of the time-constant $\mathcal{B} = \{ x \colon g(x) = 1 \}$. For general i.i.d.\ weights, \(g(x)\) is poorly understood.

Last-passage percolation with exponential or geometrically distributed i.i.d.\ vertex weights are the only known \textit{integrable} or \textit{solvable} cases. Let the weights have mean \(m\) and variance $\sigma^2$. Then, the limit-shape for both exponential and geometric weights is given by \citep{rost_non-equilibrium_1981}
\begin{equation}
    g_{\Exp}(x,y) = m(x + y) + 2 \sigma \sqrt{ x y } \quad\forall x,y \in \R_{\geq 0}^2.
    \label{eq:exponential-lpp-limit-shape}
\end{equation}
When the weights are exponentials, of course, we must have $m = \sigma$, but we will use $g_{\Exp}(x,y)$ to simply mean the function on the right hand side of \eqref{eq:exponential-lpp-limit-shape} with parameters \(m\) and \(\sigma\). Martin \citep{MR2094434} showed that this shape is asymptotically universal close to the vertical and horizontal axes:
$$
    g_F(1,s) = m   + 2 \sigma \sqrt{ s } + o(\sqrt{s}) \text{ as } s \to 0. 
$$

In the solvable models, the random fluctuations of the \lpt{} are known to be in the KPZ universality class \citep{krug_universality_1988} for growth models, since \citet{johansson_shape_2000} proved that 
\begin{equation}
    \lim_{N \to \infty} \Prob\left( \frac{G(0,[Nx]) - N \gexp(x)}{c(x) N^{\chi}} \leq t \right) = F_{GUE}(t),
    \label{eq:gue limit for last passage time}
\end{equation}
where $\chi=1/3$, \(c(x)\) is an explicit function, and $F_{GUE}(t)$ is the cdf of the GUE Tracy-Widom distribution \cite{MR1257246}. Equation~\eqref{eq:gue limit for last passage time} is conjectured to be true for all ``nice enough'' i.i.d.\ weights \citep{johansson_shape_2000}; i.e., the Tracy-Widom distribution is a universal limit. The exponent $\chi$ is called the \emph{fluctuations exponent} of the passage time. 

There is another exponent closely associated with \(\chi\) called the \emph{geodesic wandering exponent} $\xi$. One way of defining it is as follows \cite{licea_superdiffusivity_1996}: Let $L_x$ be the straight line between $0$ and $x$, and let $C(\gamma,x) \subset \R^2$ be the cylinder with central axis \(L_x\), radius \(N^{\gamma}\) and length \(|x|_1\). Let $A_N^{\gamma}$ be the event that all geodesics from $0$ to $Nx$ are contained inside the cylinder $C(\gamma,Nx)$, and let
\begin{equation*}
    \xi = \inf \{ \gamma > 0 \colon \liminf_{N \to \infty} \Prob( A_N^{\gamma} ) = 1 \} .
\end{equation*}
Originally, \citet{MR1757595} proved that $\xi = 2/3$ in a related model of two dimensional growth, the Poissonized longest increasing subsequence problem.
In the solvable \lpp{} models, this was shown by \citet{MR2268539}. In these solvable models, since $\chi = 1/3$ and $\xi = 2/3$, the \emph{first KPZ scaling relationship} $\chi = 2 \xi  - 1$ holds. This scaling relation is again conjectured to be universal~\citep{krug_universality_1988}, in that it is supposed to hold for a large class of growth models with i.i.d.\ weights, and all dimensions \(d\geq 2\). In \fpp{}, this conjecture has been proven under various unproven hypotheses on the limit-shape and the existence of these exponents \cite{chatterjee_universal_2011,MR3141825}.

In dimension $d=2$, the exponents conjecturally satisfy the \emph{second KPZ relationship} $2\chi = \xi$. This second KPZ relationship is related to the fluctuations of the so-called Busemann functions that we define next. These functions were originally used by H.\,Busemann to study geodesics in metric geometry \citep{MR0075623}, and they were introduced in \fpp{} by Newman \cite[Theorem 1.1]{newman_surface_1995}. 
They have since found many applications in first- and last-passage percolation. For example, several groups have shown that bi-infinite geodesics cannot exist in certain directions under unproven differentiability hypotheses on the limit-shape that guarantee the existence of Busemann functions \cite{2016arXiv160902447A,newman_surface_1995,MR0075623}.

Let $\directions := \{ x \colon x_1 + x_2 = 1, x_1,x_2 > 0 \}$, the set of directions in $\R_{\geq 0}^2$ relevant to \lpp{}. Suppose $\{x_n \in \Z^2\}$ is such that $|x_n|_1 \to \infty$ but $x_n/|x_n|_1 \to x\in \directions$. A Busemann function in direction $x$ is defined by the limit 
\begin{equation}
    B^x(a,b) = \lim_{n \to\infty} G(a,x_n) - G(b,x_n) 
    \label{eq:busemann-fn-definition}
\end{equation}
if it exists. Let $\derivatives = \{ x \in \directions \colon g'(x) \text{ exists} \}$. For any $x \in \derivatives$, the limit in \eqref{eq:busemann-fn-definition} is expected to exist, and moreover, the resulting function is expected to satisfy four important properties: stationarity, integrability, corrector/recovery, and expectation duality (see \Defref{def:pre Busemann functions}). This result is known in \fpp{}\,\citep{2016arXiv160902447A}; however, for \lpp{}, it is only known conditional on unproven but mild differentiability hypotheses on the limit-shape\,\citep{MR3704768} (see Appendix \ref{sec:appendix diff of limit shape}). It is also expected that there is a unique Busemann function associated with each tangent line or gradient of the limit-shape.
We will not require uniqueness or \eqref{eq:busemann-fn-definition} in this paper, and thus prove our theorems for a larger class of pre-Busemann functions, which can be shown to exist without assuming unproven hypotheses on the time-constant or passage-time. 

A heuristic argument that was communicated to us by Newman, Alexander and others in a conference held at the American Institute for Mathematics in 2016, connects the covariance of Busemann increments to the second KPZ relationship. Consider a down/right lattice path along the antidiagonal defined as follows. Let $x_i = e_1$ for odd $i$, $x_i = -e_2$ for even $i$, $v_0 = 0$, and $v_k = \sum_{i=1}^k x_i$ be the $k$\textsuperscript{th} point on the down/right path. For some $x \in \derivatives$, suppose
\begin{align}
    \Cov(B^x(0, e_1),B^x(v_k, v_{k+1})) \leq 0 \quad \forall k \geq 1, \label{eq:negative correlation of busemann functions conjecture-0}\\
    \Cov(B^x(e_2, 0),B^x(v_k, v_{k+1})) \leq 0 \quad \forall k \geq 0. \label{eq:negative correlation of busemann functions conjecture}
\end{align}
Then, the argument shows that $2\chi \leq \xi$ (see Appendix \ref{sec:appendix busemann correlations and kpz relationship} for details).

In this paper, for \lpp{} with i.i.d.\ vertex weights having distribution $F$, mean $m$ and variance $\sigma^2$, the main theorem  (\Thmref{thm:main theorem criterion of negative covariance of adjacent busemann increments}) provides an easily verifiable sufficient condition that determines whether some \emph{adjacent Busemann increments} in \eqref{eq:negative correlation of busemann functions conjecture} with $k=0$ are negatively correlated for all $x \in \derivatives$. To demonstrate the use of this criterion, we show that i.i.d.\ Bernoulli weights with parameter $p > p^* \approx 0.6504$ have negatively correlated adjacent Busemann increments. The criterion is explicit: it only involves the large deviation rate function for i.i.d.\ sums of $F$. Theorem \ref{thm:main theorem criterion of negative covariance of adjacent busemann increments} is proved by establishing that the negative (resp.\ positive) covariance of adjacent Busemann increments for all $x \in \derivatives$ is equivalent to $g_F(x) \leq g_{\Exp}(x)$ ($g_F(x) \geq g_{\Exp}(x)$) for all $x \in \R_{\geq 0}^2$, where $\gexp$ is the function in \eqref{eq:exponential-lpp-limit-shape} with parameters $m$ and $\sigma$. Our criterion in \Thmref{thm:main theorem criterion of negative covariance of adjacent busemann increments} provides a sufficient condition for this last inequality involving time-constants to hold, and thus proves negative correlation.

\begin{question}[Newman, Alexander and others]
    Fix a down/right path and some Busemann function $B^u$. Are any two distinct Busemann increments of the form $B^u(y,y+e_1)$ or $B^u(z,z-e_2)$ on the down/right path negatively correlated? In particular, for the particular down/right path that goes along the main anti-diagonal, do \eqref{eq:negative correlation of busemann functions conjecture-0} and \eqref{eq:negative correlation of busemann functions conjecture} hold? 
\end{question}

\begin{question}
    Simulations (see Figure \ref{fig:time constants}) indicate that for some distributions $F$ ($\Uniform[0,1]$, $\Bernoulli(p)$), $g_F(x) \leq g_{\Exp}(x)$ for all $x \in  \R_{\geq 0}^2$; our result provides a sufficient condition that proves this for $\Bernoulli(p)$ for $p > p^* \approx 0.6504$. For some other distributions ($\operatorname{Lognormal}$ and $\chi^2(k)$, $k < 2$) it appears as though $g_F(x) \geq g_{\Exp}(x)$ for all $x \in  \R_{\geq 0}^2$; can this be proved for some class of distributions, in which case \Lemref{lem:negative covariance implied by time-constant dominance} shows that adjacent Busemann increments are positively correlated? Are there examples of distributions where neither $g_F(x) \leq \gexp(x)$ nor $\gexp(x) \leq g_F(x)$ hold for all $x \in \R_{\geq 0}^2$?
\end{question}

\section{Main Results}
\label{sec:main results}
Let \((\Omega,\mathcal{F},\Prob)\) be a probability space. Let $\Z^2$ act on $\W$ via a family of invertible, measure-preserving maps $\{T^z\}_{z \in \Z^2}$. The weights are given by a map $X \colon \W \to \R$, such that $\{X(T^z\w)\}_{z \in \Z^2}$ are i.i.d under $\Prob$. We use the shorthand $\w_z$ to refer to $X(T^z\w)$, the weight at $z \in \Z^2$. For a set \(I \subset \Z^2\) let \(I^{<} = \{x\in \Z^2 \mid z \not \prec x \ \forall z \in I\}\) be the set of lattice points which do not lie along a ray with initial point in \(I\) and angle in \([0,\pi/2]\). The set $\R^2_{\geq 0}$ refers to tuples $(x,y)$ such that both $x$ and $y$ are nonnegative.

Suppose \(w_0\) has moment generating function (mgf) \(M(t) = \E[ e^{\w_e t} ]\) that is finite for all $t \in (-\delta, \delta)$. Then, the corresponding large deviation rate function for i.i.d.\ sums of $F$ 
\begin{equation}
    I(a) = \sup_{t} \{ at - \log M(t) \},
    \label{eq:large deviation rate function definition}
\end{equation}
is finite for all $a$ in some nontrivial interval. Thus, in this paper, we assume in addition to the nondegeneracy of $F$, that 
\begin{equation}
    \label{eq:mgf and nontriviality assumption}
    \exists \delta > 0 ~\suchthat. ~ M(t) < \infty \quad \forall t \in  (-\delta, \delta) .
\end{equation}
Next, we define the central objects of study in this paper.
\begin{define}[pre-Busemann functions]
    \label{def:pre Busemann functions}
    Let 
    \[ 
        \grads = \{ \grad g (x) \colon g(x) \text{ is differentiable at } x\in \mathcal U^{\circ} \},
    \]
    and let $\mathcal{K}$ be some index set such that there is a function $\phi \colon \indexset \to \grads$ that is onto. A set of functions $\{B^u\}_{u \in \mathcal{K}}$, $B^u \colon \W \times \Z^2  \times \Z^2 \to \R$ is a called a family of pre-Busemann functions if each $B^u$ satisfies the following properties:
   \begin{enumerate}
        \item Stationarity: for \(\mathbb{P}\)-a.e.~\(\omega\) and all \(x,y,z\in \Z^2\), \(B^u(\omega, z+x,z+y) = B^u(T^z \omega ,x,y)\).
        \item Additivity: for \(\mathbb{P}\)-a.e.~\(\omega\) and all \(x,y,z\in \Z^2\), 
            \begin{equation}
                B^u(\omega, x,y)+ B^u(\omega,y,z)=B^u(\omega,x,z).
                \label{eq:busemann function additivity}
            \end{equation}
        \item Past Independence: For any \(I\subset \Z^2\), the variables 
        \[
            \{(\omega_x,B^u(\omega ,x,y)) \colon x\in I \}
        \]
        are independent of \(\{\omega_x \colon x\in I^{<} \}\).
        \item Corrector/Recovery property: for \(\mathbb{P}\)-a.e.~\(\omega\) and all \(x\in \Z^2\)
        \begin{align}
            \w_x = \min_{i=1,2} \left( B^{u}(\omega ,x,x+e_i)  \right).
            \label{eq:busemann function dynamic programming or recovery property}
        \end{align}
        \item Expectation duality: 
        \begin{equation}
            \E[B^u(\w,0,e_1),B^u(\w,0,e_2)] = \phi(u).
            \label{eq:correct expectation of busemann functions}
        \end{equation}
   \end{enumerate}
\end{define}

\begin{remark}
    A typical choice for $\mathcal{K}$ and $\phi$ in \Defref{def:pre Busemann functions} is $K = \derivatives$ and $\phi(u) = \grad g(u)$ since under the conditions described below, pre-Busemann functions are Busemann functions satisfying \eqref{eq:busemann-fn-definition} for each $u \in \derivatives$. This is also the reason we call \eqref{eq:correct expectation of busemann functions}, ``Expectatation duality''. We will drop the notation $\w$ from $B(\w,x,y)$ if it is not relevant to the argument.
\end{remark}

Theorem 5.2 in \cite{MR3704768} originally established the existence of a family of pre-Busemann functions under the assumption that $\Prob(\omega_0\geq c ) = 1$ for some $c \in \R$, and $\E[|\w_0|^p] < \infty$ for some $p > 2$. Theorem 4.8 in \citep{MR4089495} removed the requirement that the weights were bounded below a.s. Thus, the remaining finite-moment assumption is implied by the finite mgf assumption \eqref{eq:mgf and nontriviality assumption} in this paper. We state the relevant parts of \citep[Theorem 5.2]{MR3704768} and \citep[Theorem 4.8]{MR4089495} next, leaving some details unspecified.

Let \(\Omega = \R^{\Z^{2}}\), \(\mathcal F\) be the Borel \(\sigma\)-algebra, and $\Prob$ be the i.i.d.\ measure with distribution $F$. 
Define the shift maps \(\{T^x\}_{x\in \Z^2}\) which act on \(\Omega\) by \((T^x \omega)_y=\omega_{x+y}\) for \(x,y\in \Z^2\). The family of pre-Busemann functions \(\{B_{\pm}^{u}(\hat \omega ,x,y)\}_{u \in \directions}\) are defined on an explicit extended space \(\hat \Omega = \Omega \times \Omega' = \Omega \times \R^{\{1,2\}\times \mathcal A_0 \times \Z^2}\), where \(\mathcal A_0\) is a countable dense subset of the interval \((m,\infty)\). Let $\pi \colon \hat\Omega \to \W$ represent projection onto the $\w$ coordinate. We define a family of measurable, commuting, invertible translation maps $\{\hat T^z\}_{z \in \Z^2}$, acting on $\hat\W$ as follows: Let $\hat\w \in \hat \W$ be written as $(\w,\w')$ where $\w = (\w_x)_{x \in \Z^2}$ and $\w' = (\w_x^{\alpha,i})_{\alpha \in \mathcal{A}_0,i \in \{1,2\}, x \in \Z^2}$. Let $(\hat T^x \hat\w)_y = \hat\w_{x+y}$ for all $x,y \in \Z^2$, where $\hat \w_x = (\w_x,\w'_x)$. It is clear that $\pi \circ \hat T^z = T^z \circ \pi$ for all $z \in \Z^2$; i.e., $\hat T$ intertwines correctly with $T$ under projection.


\begin{theorem*}[\citep{MR3704768}, Theorem 5.2 and \citep{MR4089495}, Theorem 4.8]
    There exist real-valued Borel functions \(B_+^{u}(\hat \omega ,x,y)\) and \(B_-^{u}(\hat \omega ,x,y)\) 
    of \( (\hat \omega, u,x,y) \in \hat \Omega \times \directions \times \Z^2 \times \Z^2\) and a 
    translation invariant Borel probability measure \(\hat{\mathbb{P}}\) such that the following properties hold.
    \begin{enumerate}
        \item Under \(\hat{\mathbb{P}} \), the marginal distribution of the configuration \(\omega = \pi(\hat \w)\) is the i.i.d.\ measure \(\mathbb{P}\). For each \(u \in \directions\) and \(\pm\), the \(\R^3\)-valued process \(\{\psi_x^{\pm,u}\}_{x\in \Z^2}\) defined by 
        \begin{align*}
            \psi_x^{\pm,u}(\hat{\omega})= (\omega_x,B_{\pm}^{u}(\hat \omega ,x,x+e_1),B_{\pm}^{u}(\hat \omega ,x,x+e_2) )
        \end{align*}
        is stationary under translation by \(\hat T^{e_1}\) or \(\hat T^{e_2}\). 
        \item For any \(I\subset \Z^2\), the variables 
        \[
            \{(\omega_x,B_+^{u}(\hat \omega ,x,y),B_-^{u}(\hat \omega ,x,y )) \mid x\in I,\ u \in \directions, \ i \in \{1,2\} \}
        \]
        are independent of \(\{\omega_x \mid x\in I^{<}\}\).
        \item \(\{B_{s}^{u}\}_{(u,s) \in \directions \times \{\pm\}}\) is a family of pre-Busemann functions satisfying properties $(1)-(5)$ in \Defref{def:pre Busemann functions}. Here, the index set is $\indexset = \directions \times \{\pm\}$, and $\phi$ maps $(u,s)$ to a unique, explicit member of the subgradient of $g_F$ at $u$.
    \end{enumerate}
\end{theorem*}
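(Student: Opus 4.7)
The plan is to construct the pre-Busemann family on the enlarged space \(\hat{\W} = \W \times \R^{\{1,2\} \times \mathcal{A}_0 \times \Z^2}\) in three stages: build finite-\(n\) candidate cocycles from passage-time differences, extract stationary limits by Cesàro averaging, and use the auxiliary \(\mathcal{A}_0\)-indexed randomness as a tiebreaker to produce two distinct cocycles \(B_{\pm}^u\) corresponding to the two extreme points of the subgradient of \(g_F\) at \(u\).

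I would parameterize candidates not by direction but by a Legendre dual mean parameter \(\alpha \in \mathcal{A}_0\) (this is why \(\mathcal{A}_0\) lives in \((m, \infty)\)), picking a sequence \(z_n^\alpha \in \Z^2\) along the direction conjugate to \(\alpha\) and defining
\[
    B_n^\alpha(\w, x, y) = G(x, z_n^\alpha) - G(y, z_n^\alpha).
\]
These are additive in \((x, y)\); past independence is automatic because \(G(x, z_n^\alpha)\) depends only on weights in the rectangle \([x, z_n^\alpha]\), so for \(x \in I\) the pair \((\w_x, B_n^\alpha(\w, x, y))\) is measurable with respect to weights at points outside \(I^{<}\). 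The mgf hypothesis \eqref{eq:mgf and nontriviality assumption} yields uniform moment bounds on single-step increments \(B_n^\alpha(\w, x, x+e_i)\) via standard Cramér-type concentration, giving tightness in \(n\).

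Next I would take Cesàro averages of the shifted candidates under \(T^{e_1}\) and \(T^{e_2}\) to obtain asymptotically stationary versions, then extract a joint subsequential weak limit over the countable index set \(\mathcal{A}_0\) by diagonalization. The resulting limit law \(\hat{\Prob}\) on \(\hat{\W}\) is invariant under the lifts \(\hat T^{e_i}\) and projects to \(\Prob\) on \(\W\); additivity, past independence, and stationarity transfer to the limit. The recovery property
\[
    \w_x = \min_{i=1,2} B^{u}(\hat{\w}, x, x + e_i)
\]
is obtained by passing the dynamic programming recursion \(G(x, z) = \w_x + \max_{i=1,2} G(x + e_i, z)\) through the limit. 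Expectation duality \(\E_{\hat{\Prob}}[B^u(\hat{\w}, 0, e_i)] = \phi(u)_i\) then follows by summing one-step increments along each axis, dividing by \(n\), and invoking the shape theorem \(G(0, z_n^\alpha)/n \to g_F\) together with Legendre duality to identify the mean of the cocycle with a specific element of the subgradient of \(g_F\) at the direction conjugate to \(\alpha\).

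The main obstacle is producing genuinely \emph{two} cocycles \(B_{\pm}^u\) at directions where \(g_F\) fails to be differentiable: one must show that the one-sided limits \(\alpha \downarrow \alpha(u)\) and \(\alpha \uparrow \alpha(u)\) yield distinct stationary cocycles whose expectations land on the two extreme points of \(\partial g_F(u)\). This is precisely where the auxiliary \(\mathcal{A}_0\)-indexed coordinates are needed, as tiebreakers inside the variational representations of the cocycles, and it is the one place where the construction must genuinely leave the original space \(\W\). A secondary technicality is to ensure the resulting family is jointly Borel in \((u, x, y, \hat{\w})\), which follows from monotonicity of \(\alpha \mapsto B^\alpha\) in the Legendre dual parameter together with separability of \(\directions\).
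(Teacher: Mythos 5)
This statement is not proved in the paper at all: it is quoted verbatim from the literature, namely \citep{MR3704768} (Theorem 5.2) and \citep{MR4089495} (Theorem 4.8), and the present paper simply takes it as a black-box input. There is therefore no in-paper proof against which to check your argument, and the fair comparison is against the construction in those two references.

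Your sketch does capture the broad shape of the Georgiou--Rassoul-Agha--Sepp\"al\"ainen construction: prelimit increments $G(x,v_n)-G(y,v_n)$, a parameter $\alpha\in\mathcal{A}_0\subset(m,\infty)$ conjugate to direction, tightness from the moment hypothesis, a distributional limit on the enlarged space, transfer of additivity/recovery/past-independence through the limit, and monotonicity in $\alpha$ to extract the left and right cocycles $B^u_\pm$ at non-differentiable directions. Two things, however, are misdescribed in a way that matters. First, the coordinates $\R^{\{1,2\}\times\mathcal{A}_0\times\Z^2}$ are not tiebreaker randomness inside a variational formula: they \emph{are} the cocycle values. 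The measure $\hat\Prob$ is a weak limit of the joint laws of $\bigl(\omega,\{G(x,v_n)-G(x+e_i,v_n)\}_{i,\alpha,x}\bigr)$, and because such a weak limit need not be realizable as a measurable function of $\omega$ alone, the increments must be carried as literal extra coordinates, and $B^\alpha(\hat\omega,x,x+e_i)$ is read off as $\omega'_{i,\alpha,x}$. Second, and related, you say the enlargement of $\W$ is needed ``only'' at non-differentiable directions. That is backwards: the construction lives on $\hat\W$ from the very start, for \emph{every} $\alpha$, precisely because the prelimit increments are not known to converge almost surely. It is only \emph{afterwards}, and only when $g_F$ is differentiable in a suitable neighborhood (Theorem 5.3 of \citep{MR3704768}, cf.\ Appendix~\ref{sec:appendix diff of limit shape}), that one shows $B^\alpha$ is measurable with respect to (the completion of) $\mathcal F$ and agrees with the Busemann limit \eqref{eq:busemann-fn-definition}.

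Two smaller cautions. Preservation of past independence and of the recovery identity \eqref{eq:busemann function dynamic programming or recovery property} under the Ces\`aro-averaged weak limit does hold, but both require a short argument (the former because the ``past'' marginal is shift-invariant, so averaging over shifts does not correlate past and future; the latter because $\min$ is continuous), and these should be spelled out rather than asserted. The expectation-duality step is also more delicate than ``divide by $n$ and invoke the shape theorem'': one must first establish that $\E_{\hat\Prob}[B^\alpha(0,e_1)]$ and $\E_{\hat\Prob}[B^\alpha(0,e_2)]$ satisfy the convex-duality relation with $g_F$, and then identify $B^u_\pm$ with the two \emph{extreme} points of $\partial g_F(u)$ via monotone one-sided limits in $\alpha$ — that identification, not the ergodic averaging, is where the real work lies.
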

Under mild differentiability hypotheses on \(g_F(x)\), the pre-Busemann functions $B^u_{\pm}$ are measurable with respect to the completion of \(\mathcal F\), and the limit in Equation \eqref{eq:busemann-fn-definition} exists (see Theorem 5.3 in \cite{MR3704768} and Appendix \ref{sec:appendix diff of limit shape}). The theorems we prove in this paper are valid for all families of pre-Busemann functions, of which families of Busemann functions satisfying \eqref{eq:busemann-fn-definition} are a subset.

\begin{theorem}
    Let \(F\) have mean \(m\), variance \(\sigma^2 > 0\), and mgf satisfying \eqref{eq:mgf and nontriviality assumption}. Let $\{B^u\}_{u \in \indexset}$ be a family of pre-Busemann functions satisfying the conditions in \Defref{def:pre Busemann functions}. Suppose
    \begin{equation}
        \log\left(4\right)  \frac{s}{s + 1} < I \left( \frac{\gexp(1,s)}{1+s} \right) \quad \forall s \in (0,1),
    \end{equation}
    where $\gexp(1,s)$ is the function in \eqref{eq:exponential-lpp-limit-shape} with parameters $m$ and $\sigma$, and $I$ is the rate function for i.i.d.\ sums of $F$. Then,
    \begin{equation*}
    \Cov ( B^u(e_2,0), B^u(0,e_1) ) \leq 0 \quad \forall u \in \indexset,
    \end{equation*}
    \label{thm:main theorem criterion of negative covariance of adjacent busemann increments}
\end{theorem}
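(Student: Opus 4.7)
The plan is to prove the theorem in two main steps.

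First, I would invoke \Lemref{lem:negative covariance implied by time-constant dominance} from the introduction, which establishes that
\[
\Cov(B^u(e_2,0),\,B^u(0,e_1)) \leq 0 \text{ for all } u \in \indexset \iff g_F(x) \leq g_{\Exp}(x) \text{ for all } x \in \R_{\geq 0}^2.
\]
The theorem therefore reduces to proving the time-constant dominance $g_F \leq g_{\Exp}$ from the rate function hypothesis.

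For the second step, by $1$-homogeneity, the symmetry $(x,y) \leftrightarrow (y,x)$, continuity, and the coincidence of $g_F$ and $g_{\Exp}$ on the coordinate axes, it suffices to show $g_F(1,s) \leq g_{\Exp}(1,s)$ for every $s \in (0,1)$. By the subadditive ergodic theorem, $g_F(1,s) = \lim_N G(0,[N(1,s)])/N$ almost surely, so the task reduces to establishing $\Prob(G(0,[N(1,s)]) \geq N\tau) \to 0$ for each $\tau > g_{\Exp}(1,s)$. My approach is a union bound over up-right paths. Each such path $\Gamma$ has $N(1+s)+1$ vertices, and Cramér's theorem (valid under the mgf hypothesis \eqref{eq:mgf and nontriviality assumption}) yields
\[
\Prob(W(\Gamma) \geq N\tau) \leq \exp\bigl(-N(1+s)\,I(\tau/(1+s))\bigr).
\]
Rewriting the hypothesis as $(1+s)\,I(g_{\Exp}(1,s)/(1+s)) > s \log 4$ shows that each individual path's tail decays at least as $4^{-Ns + o(N)}$ for $\tau$ just above $g_{\Exp}(1,s)$.

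The principal obstacle is that the total number of up-right paths, $\binom{N(1+s)}{N} \sim \exp\bigl(N[(1+s)\log(1+s) - s\log s]\bigr)$, grows strictly faster than $4^{Ns}$ on $s \in (0,1)$, with equality only at the diagonal $s=1$ and in the axis limit $s \to 0^+$. Thus the naive union bound closes at $s=1$ but fails for generic $s$. I expect the proof must either restrict to a thinner class of paths adapted to the direction $(1,s)$—for example, paths confined in a narrow cylinder around the straight segment from $0$ to $[N(1,s)]$, together with a complementary argument that paths outside this cylinder cannot be near-maximal—or exploit the corrector/recovery and past-independence of the pre-Busemann functions from \Defref{def:pre Busemann functions} to obtain a multi-scale or cocycle-based bound that bypasses direct enumeration. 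Closing this combinatorial gap so that the path-tail decay $4^{-Ns}$ beats the true path count is the technical heart of the proof.
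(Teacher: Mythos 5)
Your first two steps match the paper exactly: you reduce via \Lemref{lem:negative covariance implied by time-constant dominance} to proving $g_F \leq \gexp$ on $\R^2_{\geq 0}$, then by homogeneity, symmetry and continuity (\Propref{prop:extending time constant inequality from 0-1 interval to entire first quadrant}) to $g_F(1,s) \leq \gexp(1,s)$ for $s \in (0,1)$, and you correctly set up the union-bound-plus-Cram\'er framework of \Thmref{thm:rate function criterion for time-constant dominance}. You have also put your finger on exactly the right obstacle: the naive path count has exponent $(1+s)\log(1+s)-s\log s$, which strictly exceeds $s\log 4$ for $s\in(0,1)$, so the per-path tail bound $\exp(-N(1+s)I(K/(1+s)))$ cannot be unioned over all up/right paths.

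The gap is that neither of your proposed remedies is the one that closes the argument, and you do not develop either. A cylinder restriction would require an a priori transversal-localization estimate for near-maximal paths, which is a hard geodesic-fluctuation statement, not available at this level of generality and arguably circular; and the cocycle/recovery properties of pre-Busemann functions provide no entropy reduction for the path count. The missing idea is a \emph{coarse-graining} argument in the spirit of \citep{MR3580031}: fix anti-diagonal lines $\antidiaglines_k$ of slope $-r$ spaced $M\sim N^{\beta}$ apart and place coarse grid points spaced $L\sim N^{\alpha}$ (with $\alpha<\beta<1$) along each; show that any up/right path $\Gamma$ can be deformed, at a weight cost $O(L\,b_N\,N/M)=o(N)$ on the high-probability event $\good_N$, into a surrogate path $\Gamma'$ that crosses each $\antidiaglines_k$ only at a coarse grid point. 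Crucially this does not restrict the paths considered --- it replaces each path by a nearby surrogate drawn from the far smaller family $\pathp_N$, whose size (\Propref{prop:count for number of coarse-grained paths}) satisfies $\tfrac{1}{N}\log|\pathp_N|\to\tfrac{s+r}{1+r}\log 4$. Sending $r\downarrow 0$ pushes this entropy down to $s\log 4$, which the hypothesis strictly beats, and the union bound over $\pathp_N$ then closes. That entropy-reduction-by-surrogate step is the technical heart you correctly flagged but did not supply; without it, the argument as written does not close for any $s\in(0,1)$.
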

Theorem \ref{thm:main theorem criterion of negative covariance of adjacent busemann increments} follows from Lemma \ref{lem:negative covariance implied by time-constant dominance} and Theorem \ref{thm:rate function criterion for time-constant dominance} below.
\begin{lemma}
    Under the conditions of \Thmref{thm:main theorem criterion of negative covariance of adjacent busemann increments}, let $g_F$ be the time-constant.
    Then,
    \begin{equation}
        \begin{aligned}
           & g_F(x) \leq \gexp(x) \quad \forall x \in \R_{\geq 0}^2 \\
           & \qquad \qquad \Longleftrightarrow \\
           & \Cov(B^u(e_2,0), B^u(0,e_1) ) \leq 0 \quad \forall u \in \indexset. 
       \end{aligned}
           \label{eq:negative-covariance implied by time-constant dominance}
   \end{equation}
    The equivalence in \eqref{eq:negative-covariance implied by time-constant dominance} holds with both inequalities reversed, thus providing an analogous equivalence for positive correlation.
   \label{lem:negative covariance implied by time-constant dominance}
\end{lemma}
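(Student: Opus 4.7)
The plan is to convert the covariance statement into a gradient inequality via the corrector identity and expectation duality, and then translate this gradient inequality into the pointwise bound $g_F \le \gexp$ by a convex duality argument.

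Set $\alpha := B^u(0,e_1)$ and $\beta := B^u(0,e_2)$. Additivity \eqref{eq:busemann function additivity} gives $B^u(e_2,0) = -\beta$, so $\Cov(B^u(e_2,0), B^u(0,e_1)) = -\Cov(\alpha,\beta)$, reducing the lemma to showing that $\Cov(\alpha,\beta) \ge 0$ for every $u \in \indexset$ is equivalent to $g_F \le \gexp$ on $\R_{\geq 0}^2$. The corrector/recovery property \eqref{eq:busemann function dynamic programming or recovery property} gives $\w_0 = \min(\alpha,\beta)$, and combining this with $\alpha + \beta = \min(\alpha,\beta) + \max(\alpha,\beta)$ yields the deterministic identity $(\alpha - m)(\beta - m) = (\w_0 - m)(\max(\alpha,\beta) - m)$. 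Taking expectations produces the covariance identity
$\Cov(\alpha,\beta) = \Cov(\w_0, \max(\alpha,\beta)) - \bigl(\E[\alpha] - m\bigr)\bigl(\E[\beta] - m\bigr).$

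By expectation duality \eqref{eq:correct expectation of busemann functions}, the pair $(\E[\alpha], \E[\beta]) = \phi(u)$ ranges over the (sub)gradient image of $g_F$ as $u$ varies over $\indexset$, so $(\E[\alpha]-m)(\E[\beta]-m)$ coincides with $(\partial_1 g_F(u) - m)(\partial_2 g_F(u) - m)$ at points of differentiability. Differentiating \eqref{eq:exponential-lpp-limit-shape} yields the hyperbola identity $(\partial_1 \gexp(u) - m)(\partial_2 \gexp(u) - m) = \sigma^2$ identically on $\directions$. The key step I anticipate is a Burke-type identity $\Cov(\w_0, \max(\alpha,\beta)) = \sigma^2$ valid for every pre-Busemann family; this generalizes the independence of $\min$ and $\max - \min$ that holds automatically for independent exponentials, and I expect it to follow from past independence combined with the queueing structure of the Sepp\"al\"ainen-type construction in \cite{MR3704768,MR4089495}. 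Granting this, the covariance identity becomes
$\Cov(\alpha,\beta) = \sigma^2 - \bigl(\partial_1 g_F(u) - m\bigr)\bigl(\partial_2 g_F(u) - m\bigr).$

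It remains to show that $g_F \le \gexp$ on $\R_{\geq 0}^2$ is equivalent to $(\partial_1 g_F(u) - m)(\partial_2 g_F(u) - m) \le \sigma^2$ for every $u \in \derivatives$. Both $g_F$ and $\gexp$ are $1$-homogeneous, concave, and agree on the coordinate axes where they equal $m x_i$; hence each coincides with the support function of its polar convex body $K_g := \{y : \la y,x \ra \ge g(x)\ \forall x \in \R_{\geq 0}^2\}$, with $g(x) = \inf_{y \in K_g} \la y,x\ra$. A direct computation yields $K_{\gexp} = \{y : y_i \ge m,\ (y_1-m)(y_2-m) \ge \sigma^2\}$, so $g_F \le \gexp$ is equivalent to $K_F \supseteq K_{\gexp}$, which is in turn equivalent to the gradient curve of $g_F$ (the boundary of $K_F$) lying beneath the hyperbola $(y_1-m)(y_2-m) = \sigma^2$. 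Combining this with the preceding covariance formula establishes both directions of \eqref{eq:negative-covariance implied by time-constant dominance}; the positive-correlation statement follows by reversing all inequalities.

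The main obstacle is the Burke-type identity $\Cov(\w_0, \max(\alpha,\beta)) = \sigma^2$. If this fails to hold exactly in full generality, the argument must be replaced by controlling the discrepancy $\Cov(\w_0, \max(\alpha,\beta)) - \sigma^2$ against the gap between the gradient curves of $g_F$ and $\gexp$; the exactness of the lemma's equivalence, however, strongly suggests that this identity is indeed exact for any pre-Busemann family satisfying the properties of \Defref{def:pre Busemann functions}.
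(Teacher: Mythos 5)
Your argument is essentially the paper's, just reorganized, and the Burke-type identity you flag as the main obstacle is in fact immediate from the properties you already invoke. Indeed, by additivity and recovery, $\max(\alpha,\beta) - \w_0 = |\alpha - \beta| = |B^u(e_2,e_1)|$, and by past independence (property (3) of \Defref{def:pre Busemann functions} with $I = \{e_2\}$, noting $0 \in \{e_2\}^<$) the random variable $B^u(e_2,e_1)$ is independent of $\w_0$, so $\Cov(\w_0,\max(\alpha,\beta)) = \Var(\w_0) = \sigma^2$; no extra queueing structure beyond \Defref{def:pre Busemann functions} is needed. The paper carries out the same covariance computation by writing $\alpha = \w_0 + B^u(e_2,e_1)^+$ and $\beta = \w_0 + B^u(e_2,e_1)^-$ and using $B^+B^- \equiv 0$, and for the duality step it uses a one-dimensional Legendre transform of the slice $\gamma(s) = g_F(1,s)$ rather than your polar-body picture, but these are the same argument.
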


The second ingredient needed for proving \Thmref{thm:main theorem criterion of negative covariance of adjacent busemann increments} is the following theorem, which gives a sufficient condition on the rate function for i.i.d.\ sums of $F$ that determines when $g_F(x) \leq K$ for some $K > m$. 
\begin{theorem}
Under the conditions of \Thmref{thm:main theorem criterion of negative covariance of adjacent busemann increments}, let $s \in (0,\infty)$ and $K > m$ be such that
\begin{equation}\label{eq: rate function criterion for time-constant dominance}
    \log \left(4\right) \frac{s}{s + 1} < I\left(\frac{K}{1+s} \right).
\end{equation}
Then, we have $g_F(1,s) \leq K$.
\label{thm:rate function criterion for time-constant dominance}
\end{theorem}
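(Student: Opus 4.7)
The plan is to prove exponential decay of the upper tail
\[
\Prob\bigl(G(0, (n, \lfloor sn \rfloor)) \geq (K+\epsilon)n\bigr)
\]
for every $\epsilon > 0$; combined with the a.s.\ and $L^1$ convergence of $G(0, (n, \lfloor sn \rfloor))/n$ to $g_F(1,s)$ granted by the superadditive ergodic theorem (applied to the superadditive family $G$), Borel--Cantelli then forces $g_F(1,s) \leq K + \epsilon$, and $\epsilon \downarrow 0$ yields the theorem.

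Write $m_n = \lfloor sn \rfloor$ and $N_n = n + m_n$, so any up/right path $\Gamma$ from $(0,0)$ to $(n, m_n)$ visits $N_n + 1$ vertices and $W(\Gamma)$ is a sum of that many i.i.d.\ copies of $\omega_0$. By Cram\'er's theorem, enabled by the mgf condition \eqref{eq:mgf and nontriviality assumption},
\[
\Prob\bigl(W(\Gamma) \geq (K+\epsilon)n\bigr) \leq \exp\!\left(-(N_n+1)\, I\!\left(\frac{(K+\epsilon)n}{N_n+1}\right)\right),
\]
and since $K > m$ the argument of $I$ tends to $(K+\epsilon)/(1+s) > m$, so the rate is strictly positive. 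A union bound over the $\binom{N_n}{n}$ up/right paths from $(0,0)$ to $(n, m_n)$ gives, after normalization by $n$,
\[
\frac{1}{n}\log \Prob\bigl(G(0,(n,m_n)) \geq (K+\epsilon)n\bigr) \leq \frac{1}{n}\log\binom{N_n}{n} - (1+s)\, I\!\left(\frac{K+\epsilon}{1+s}\right) + o(1).
\]

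The decisive step is the asymptotic estimate
\[
\limsup_{n \to \infty}\frac{1}{n}\log\binom{N_n}{n} \leq s\,\log 4,
\]
which together with the hypothesis $\log(4)\cdot s/(s+1) < I(K/(1+s))$ and continuity of $I$ makes the exponent strictly negative for $\epsilon$ small, yielding the required exponential decay. For $s \geq 1$, Stirling's formula provides the exact limit $(1+s)\log(1+s) - s\log s$, and a one-line calculus check (alternatively the classical $\binom{2k}{k} \leq 4^k$ bound) confirms $(1+s)\log(1+s) - s\log s \leq s\log 4$, with equality at $s = 1$. The regime $s \in (0,1)$ is the principal obstacle: there the Stirling limit strictly exceeds $s\log 4$, and the symmetry reduction $g_F(1,s) = s\, g_F(1, 1/s)$ transforms the assumed condition at $s$ into a strictly stronger condition at $s' = 1/s > 1$ rather than the corresponding weaker one. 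I expect the proof handles this regime by sharpening the single-path/union bound step—for instance through a more careful integration of the tail to bound $\E[G(0,(n, m_n))]/n$ directly, or by exploiting the concavity and $1$-homogeneity of $g_F$ together with the fact that the hypothesis already determines $g_F$ on a dense set of directions—so as to produce exactly the coefficient $\log(4)\cdot s/(s+1)$ stated in the theorem.
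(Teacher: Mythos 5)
Your skeleton is the right one: bound $\Prob(G(0,(n,\lfloor sn\rfloor)) > Kn)$ by a union bound over paths, control the single-path tail by Cram\'er, and note that the Stirling asymptotic for $\binom{N_n}{n}$, namely $(1+s)\log(1+s) - s\log s$ per unit of $n$, is $\le s\log 4$ precisely when $s\ge 1$, with equality at $s=1$. Your symmetry calculation is also correct: reflecting $s\in(0,1)$ to $s'=1/s$ leaves the argument of $I$ unchanged but turns the assumed $\tfrac{s}{1+s}\log 4 < I$ into the strictly stronger requirement $\tfrac{1}{1+s}\log 4 < I$, so symmetry cannot rescue the naive bound. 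At that point, however, your proposal stops; the suggestions you float (sharper tail integration, exploiting concavity and homogeneity of $g_F$) are not what closes the gap, and none of them would: the difficulty is genuinely entropic, not a question of tightening a single-path estimate.

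What the paper does is a \emph{coarse-graining (entropy-reduction) argument}, crediting \citet{MR3580031} for the idea. Fix a small rational $r>0$ and scales $L=\lfloor N^\alpha\rfloor$, $M=\lfloor N^\beta\rfloor$ with $\alpha<\beta<1$. Lay down antidiagonal lines $\antidiaglines_k$ of slope $-r$ spaced $M$ apart, each with a set of ``coarse grid'' points $C_g$ spaced $L$ apart, and a ``free zone'' of width $L$ on one side. Any up/right path $\Gamma$ that exceeds $NK$ can be modified into $\Gamma'$ that crosses each $\antidiaglines_k$ at a coarse-grid point, using a detour of $\ell^1$-length at most $2L$ inside the free zone; on the ``good event'' that all weights in the box are bounded by $b_N = N^\gamma$ (whose complement has probability $\le csN^2 e^{-\lambda b_N}$ by the mgf assumption), the passage time changes by at most $O(L\,b_N\,N/(Mr))$, which is $o(N)$ provided $\alpha+\gamma<\beta$. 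The number of such coarse-grained paths is then counted segment by segment: between two consecutive antidiagonal lines the path lies in a right triangle of legs $M$ and $Mr$ and must exit at one of $\sim M/L$ coarse-grid points, each reachable by at most $\binom{2Mr/(1+r)}{Mr/(1+r)} \le 4^{Mr/(1+r)}$ up/right paths, times $4^{2L}$ for the free-zone detour. Raising this to the $\sim N(s+r)/(Mr)$ segments gives
\[
\frac{1}{N}\log|\pathp_N| = \frac{s+r}{1+r}\log 4 + O(N^{-(\beta-\alpha)}),
\]
which tends to $s\log 4$ as $r\to 0$. This replaces your Stirling exponent by $s\log 4$ for \emph{all} $s>0$, at the cost only of the $o(1)$ perturbation in the Cram\'er rate, and the strict inequality in the hypothesis absorbs the choice of a small enough $r$. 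In short: your proposal correctly identifies where the naive bound fails and why, but it is missing the one idea---coarse-graining the path ensemble---that makes the theorem true for $s<1$; the alternatives you suggest do not address the entropy overcount and would not work.
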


\begin{remark}
The statement of \Lemref{lem:negative covariance implied by time-constant dominance} suggests the use of a convex ordering inequality in a manner similar to \citet{van_den_berg_inequalities_1993}. Given two distributions $F_1$ and $F_2$, we say $F_2$ is \textbf{more variable} than $F_1$ and write $F_1 \ll F_2$ if
\begin{equation}
    \int \phi dF_1 \leq \int \phi dF_2
\end{equation}
for all convex, non-decreasing integrable functions $\phi$. If \(F_1 \ll F_2\), since the \lpt{} is a convex non-decreasing function of the edge-weights, the associated time-constants satisfy $g_1(x) \leq g_2(x) ~\forall x \in \R_{\geq 0}^2$. For this to apply in our case, we would need to find a distribution $F$ with the same mean and variance as an exponential distribution $\Exp(\lambda)$, such that $F \ll \Exp(\lambda)$. Unfortunately, the following proposition shows this to be impossible.
\end{remark}

\begin{prop}
    Let \(X\) and \(Y\) be random variables with different distributions \(F\) and \(G\). If \(E[X]=E[Y]\) and \(E[X^2] = E[Y^2] \) then \(G \ll F\) cannot hold. 
    \label{prop:failure of convex ordering when matching moments}
\end{prop}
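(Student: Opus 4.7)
The plan is to argue by contradiction: assume $G \ll F$, and show that together with the matching first two moments this forces $F = G$.

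The first step is to feed the one-parameter family $\phi_a(x) = (x-a)^+$ --- which is convex, non-decreasing, and integrable under both $F$ and $G$ since both have finite means --- into the definition of $\ll$. This yields $\E[(Y-a)^+] \le \E[(X-a)^+]$ for every $a \in \R$, and using the identity $\E[(Z-a)^+] = \int_a^\infty (1 - H(y))\,dy$ for $Z \sim H$, I would rewrite this as
\[
    h(a) \;:=\; \int_a^\infty \bigl(F(y) - G(y)\bigr)\,dy \;\geq\; 0 \qquad \text{for every } a \in \R .
\]
Now $h(+\infty)=0$ immediately, and the identity $\E[Y]-\E[X] = \int_{\R}(F-G)\,dy$ combined with $\E[X]=\E[Y]$ gives $h(-\infty) = 0$ as well.

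The second step is to translate the equality of second moments into an integral condition on $h$. A double integration by parts would give
\[
    \E[Y^2] - \E[X^2] \;=\; -\int_{\R} x^2\, d(F-G)(x) \;=\; 2\int_{\R} x \bigl(F(x)-G(x)\bigr)\,dx \;=\; 2\int_{\R} h(x)\,dx,
\]
provided the boundary terms $x^2(F-G)(x)$ and $x\,h(x)$ vanish at $\pm\infty$. Since the left-hand side is zero and $h\ge 0$, this forces $h \equiv 0$; continuity of $h$ (it is an absolutely continuous function with derivative $G-F$) upgrades this to everywhere, and differentiating yields $F \equiv G$, contradicting the hypothesis $F \neq G$.

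The only technical point is justifying the vanishing of the boundary terms. Using the elementary bound $|F(y)-G(y)| \le (1-F(y)) + (1-G(y))$ for large positive $y$ (with the analogous estimate $|F(y)-G(y)| \le F(y)+G(y)$ for large negative $y$), the claims $x^2(F-G)(x) \to 0$ and $x\,h(x) = x\int_x^\infty (F-G) \to 0$ both follow from the uniform integrability consequences $\E[X^2 \mathbf{1}_{|X|>M}],\ \E[Y^2 \mathbf{1}_{|Y|>M}] \to 0$ as $M\to\infty$ of finite second moments. I expect this bookkeeping to be the only nontrivial piece; the conceptual content of the argument is the first step, namely that applying $\ll$ to the family $(x-a)^+$ together with $\E[X]=\E[Y]$ already recovers the full convex-order inequality $\int h \geq 0$, which is stronger than $\ll$ alone is known to give.
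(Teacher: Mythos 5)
Your argument is correct, but it goes by a genuinely different route than the paper's. The paper argues probabilistically: it invokes the Strassen-type representation (Theorem 2 of Strassen/Berg--Kesten, cited as \citep{MR587206}) to turn $G\ll F$ into a coupling with $E[X\mid Y]\leq Y$ a.s., uses $E[X]=E[Y]$ and the tower property to upgrade this to the martingale coupling $E[X\mid Y]=Y$, and then applies conditional Jensen to the second moments to force $X=Y$ a.s. Your proof instead works directly with distribution functions: feeding the test family $\phi_a(x)=(x-a)^+$ into $\ll$, using $E[X]=E[Y]$ to normalize, integrating by parts against $E[X^2]=E[Y^2]$, and reading off $h\equiv 0$. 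Conceptually both rest on the same fact --- that increasing convex order plus equal means is genuine convex order, and strict convex order strictly increases the second moment --- but the paper gets there by quoting an existing coupling theorem and a one-line Jensen argument, whereas yours is self-contained at the cost of the tail/boundary-term bookkeeping, which you correctly reduce to $E[X^2\mathbf 1_{|X|>M}],E[Y^2\mathbf 1_{|Y|>M}]\to 0$.

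One small slip to fix: the direction of the inequality on $h$. Since $E[(Z-a)^+]=\int_a^\infty(1-H)$, the hypothesis $E[(Y-a)^+]\leq E[(X-a)^+]$ reads $\int_a^\infty(1-G)\leq\int_a^\infty(1-F)$, i.e.\ $h(a)=\int_a^\infty(F-G)\leq 0$, not $\geq 0$. This is harmless --- your argument only uses that $h$ is single-signed with $\int_\R h=0$, which is sign-agnostic --- but as written the sign is reversed. Everything else, including the Fubini/integration-by-parts identity $E[Y^2]-E[X^2]=2\int_\R h$, the vanishing of $x^2(F-G)(x)$ and $xh(x)$ at $\pm\infty$, and the upgrade from $h=0$ a.e.\ to $F=G$ via right-continuity of CDFs, is sound.
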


Next, we utilize the criterion in \Thmref{thm:main theorem criterion of negative covariance of adjacent busemann increments} to demonstrate a distribution for which adjacent Busemann increments are negatively correlated.

\begin{prop}
Consider \lpp{} with i.i.d.\ $\Bernoulli(p)$ weights. If $p > p^* \approx 0.6504$ (see \Propref{prop: bernoulli inequality} for the definition of $p^*$), then the criterion in \Thmref{thm:main theorem criterion of negative covariance of adjacent busemann increments} is satisfied.
\label{prop:bernoulli weights}
\end{prop}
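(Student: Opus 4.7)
The plan is to substitute the explicit $\Bernoulli(p)$ rate function into the criterion of \Thmref{thm:main theorem criterion of negative covariance of adjacent busemann increments} and reduce to a one-variable inequality. With $m=p$, $\sigma = \sqrt{p(1-p)}$, and $I(a) = a\log(a/p) + (1-a)\log((1-a)/(1-p))$ on $[0,1]$ (and $+\infty$ elsewhere), the trigonometric substitution $s = \tan^2(\theta/2)$ gives $s/(s+1) = (1-\cos\theta)/2$ and $2\sqrt{s}/(s+1) = \sin\theta$. As $s$ ranges over $(0,1)$, $\theta$ ranges over $(0,\pi/2)$, and the criterion becomes
\[
\Phi(p,\theta) \;:=\; I\bigl(p + \sigma\sin\theta\bigr) \;-\; \log(2)\bigl(1-\cos\theta\bigr) \;>\; 0 \qquad \text{for all } \theta \in (0,\pi/2).
\]

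First I would analyze the endpoints. At $\theta = 0$, one has $\Phi = \partial_\theta \Phi = 0$ and $\partial_\theta^2 \Phi(p,0) = 1 - \log 2 > 0$ (using $I''(p) = 1/\sigma^2$), so $\Phi > 0$ in a neighborhood of $0$. At the upper endpoint the argument $a = p + \sigma\sin\theta$ exceeds $1$ once $\sin\theta > \sqrt{(1-p)/p}$; for $p > 1/2$ this happens at $\theta^*(p) := \arcsin\sqrt{(1-p)/p} < \pi/2$, beyond which $I = +\infty$ and the inequality is trivial. The nontrivial task is therefore to show $\Phi(p,\theta) > 0$ on $(0,\theta^*(p))$, whose boundary value is $\Phi(p,\theta^*(p)) = -\log p - \log(2)\bigl(1 - \sqrt{(2p-1)/p}\bigr)$, itself strictly positive for every $p > 1/2$.

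Next I would locate the interior critical points of $\theta \mapsto \Phi(p,\theta)$. Setting $\partial_\theta \Phi = 0$ yields
\[
I'\bigl(p + \sigma\sin\theta\bigr) \;=\; \frac{\log(2)\,\tan\theta}{\sigma},
\]
and coupling this with $\Phi = 0$ gives a system of two transcendental equations in the two unknowns $(p,\theta)$. I would define $p^*$ as the threshold: the unique $p \in (1/2, 1)$ for which this coupled system has a solution, i.e.\ the value at which the interior minimum of $\Phi(p,\cdot)$ first touches zero. Numerical solution produces $p^* \approx 0.6504$, the value referenced in \Propref{prop: bernoulli inequality}.

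The final step is a monotonicity argument: for $p > p^*$, I would show $\min_\theta \Phi(p,\theta) > 0$. Using $\partial_p I(a)\big|_{a \text{ fixed}} = (p-a)/\sigma^2$ together with the chain rule for the $p$-dependence of $a$ and $\sigma$, one computes $\partial_p \Phi > 0$ at the critical $\theta$ when $p = p^*$; by the implicit function theorem and uniqueness of the interior minimizer, this propagates to every $p > p^*$. The main technical obstacle is understanding $\Phi$ near $\theta = \theta^*(p)$: because $I'$ diverges as the argument approaches $1$, $\Phi$ dips by $O(\delta\log\delta)$ just inside that endpoint, and the balance between this dip and the strictly positive endpoint value $\Phi(p,\theta^*(p))$—both of which vary with $p$—is precisely what determines $p^*$ and makes the argument nontrivial.
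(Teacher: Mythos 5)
Your change of variables $s=\tan^2(\theta/2)$ is a clean algebraic simplification, and your identification of the relevant interval — $\theta\in(0,\theta^*(p))$ with $\theta^*(p)=\arcsin\sqrt{(1-p)/p}$ corresponding to the paper's $s^*(p)$ where $p+u_s=1$ — matches the paper's setup. But the core argument you propose is genuinely different from, and considerably weaker than, what the paper does. The paper does not locate interior critical points at all. It bounds $\phi'(s)$ directly: after computing $\phi'(s)=u_s'\psi(s)$, it applies the elementary inequalities $\log(1+x)>x/2$ and $\log(1-x)<-x$ to the two logarithms in $\psi(s)$, obtaining
\[
\phi'(s)\;\le\;\frac{1}{(1+s)^3}\bigl((1+s)\log 4-(1+p)(1-s)\bigr),
\]
which integrates (with $\phi(0)=0$) to $\phi(s)\le s\bigl((1+s)\log 4-(1+p)\bigr)/(1+s)^2$. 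This yields the explicit algebraic definition $\log 4=(1+p^*)/(1+s^*(p^*))$. No critical-point analysis, no implicit function theorem, no study of the $O(\delta\log\delta)$ dip near the endpoint: the crude log bounds make all of that unnecessary.

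More seriously, your proposed definition of $p^*$ is wrong and cannot produce the value $0.6504$. You define $p^*$ as the threshold where the interior minimum of $\Phi(p,\cdot)$ first touches zero — i.e., the true threshold for the criterion. But the paper explicitly remarks, and its Figure 4 shows, that the criterion $\phi(s)<0$ actually holds numerically for all $p\ge 1/2$ and fails for $p<1/2$; the true threshold is $\approx 1/2$, not $0.6504$. The value $p^*\approx 0.6504$ arises only because the paper's log inequalities are not tight; it is an artifact of their specific sufficient bound, not the exact threshold. So "numerical solution produces $p^*\approx 0.6504$" for your coupled transcendental system is not correct; you would instead recover roughly $1/2$. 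Finally, even setting this aside, your sketch leaves several nontrivial steps unjustified — the uniqueness of the interior minimizer, the sign of $\partial_p\Phi$ at the critical $\theta$, and the global monotonicity via the implicit function theorem — none of which are needed in the paper's argument. If you want to pursue your route, you must be explicit that your $p^*$ is a different quantity and verify the numerics; as written, it does not prove the stated proposition with the stated $p^*$.
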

Numerical computations show that the condition in \Thmref{thm:main theorem criterion of negative covariance of adjacent busemann increments} holds for $\Bernoulli(p)$ weights for all parameters $p \geq 1/2$, and we believe that \Propref{prop:bernoulli weights} can be extended to this setting (see \Figref{fig:coarse-grid-ber-25-r0}). It follows trivially that shifted and scaled $\Bernoulli$ random variables also have negatively correlated Busemann function increments. 
\begin{cor}
    Any random variable \(Y\) with \(P(Y=a)=1-p\) and \(P(Y=b) =p\) for \(p^*<p<1\) as in \Propref{prop: bernoulli inequality}, also satisfies the criterion in \Thmref{thm:main theorem criterion of negative covariance of adjacent busemann increments}.
\label{cor:criterion for general shifted bernoulli}
\end{cor}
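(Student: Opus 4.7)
The plan is to reduce the corollary to \Propref{prop:bernoulli weights} by establishing that the criterion of \Thmref{thm:main theorem criterion of negative covariance of adjacent busemann increments} is invariant under any affine transformation $Y = a + cX$ with $c > 0$. In the setting of the corollary one takes $c = b - a > 0$ (the natural case $b > a$) and $X \sim \Bernoulli(p)$, so that $Y$ is a positive affine image of $X$; \Propref{prop:bernoulli weights} supplies the criterion for $X$ itself.

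I would start by recording the three transformation rules. From $Y = a + cX$ with $c > 0$ one gets $m_Y = a + c\,m_X$ and $\sigma_Y = c\,\sigma_X$, so
\[
    \frac{g_{\Exp}^Y(1,s)}{1+s} \;=\; m_Y + \frac{2\sigma_Y\sqrt{s}}{1+s} \;=\; a + c\cdot\frac{g_{\Exp}^X(1,s)}{1+s}.
\]
For the rate function, the mgf satisfies $M_Y(t) = e^{at}M_X(ct)$, and the substitution $u = ct$ (a bijection of $\R$ since $c>0$) in the Legendre transform \eqref{eq:large deviation rate function definition} yields
\[
    I_Y(y) \;=\; \sup_{t \in \R}\bigl\{\,t(y-a) - \log M_X(ct)\,\bigr\} \;=\; I_X\!\left(\frac{y-a}{c}\right).
\]
Composing these two identities gives the key invariance
\[
    I_Y\!\left(\frac{g_{\Exp}^Y(1,s)}{1+s}\right) \;=\; I_X\!\left(\frac{g_{\Exp}^X(1,s)}{1+s}\right),
\]
so the inequality $\log(4)\,\tfrac{s}{s+1} < I\!\left(\tfrac{g_{\Exp}(1,s)}{1+s}\right)$ of \Thmref{thm:main theorem criterion of negative covariance of adjacent busemann increments} is \emph{literally the same} statement when evaluated for $X$ and for $Y$.

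Given this invariance, \Propref{prop:bernoulli weights} applied to $X \sim \Bernoulli(p)$ with $p > p^*$ immediately supplies the criterion for $Y$, and \Thmref{thm:main theorem criterion of negative covariance of adjacent busemann increments} then delivers the negative covariance of adjacent Busemann increments for any pre-Busemann family built from $Y$. I do not foresee a substantive obstacle: the only nontrivial point is the affine-invariance of the Legendre transform under positive scaling, which is routine. The restriction $b > a$ is genuine, since with $b < a$ the same $Y$ is a positive affine image of $\Bernoulli(1-p)$, to which \Propref{prop:bernoulli weights} need not apply because $1-p < 1-p^*$.
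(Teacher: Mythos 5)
Your proof is essentially the same as the paper's: both write $Y$ as a positive affine image of $X \sim \Bernoulli(p)$, derive $I_Y(y)=I_X\bigl((y-a)/(b-a)\bigr)$, use $m_Y=(b-a)m_X+a$ and $\sigma_Y=(b-a)\sigma_X$ to cancel the shift and scaling in the argument, and invoke \Propref{prop: bernoulli inequality}. You add a useful clarification the paper leaves implicit — the proof tacitly assumes $b>a$ (otherwise the argument of $I_X$ picks up a sign and $Y$ is an affine image of $\Bernoulli(1-p)$, outside the range of \Propref{prop:bernoulli weights}) — but the substance of the argument is identical.
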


\begin{figure}[!htpb]
    \centering
    \begin{subfigure}[b]{0.32\textwidth}
            \includegraphics[width=\textwidth]{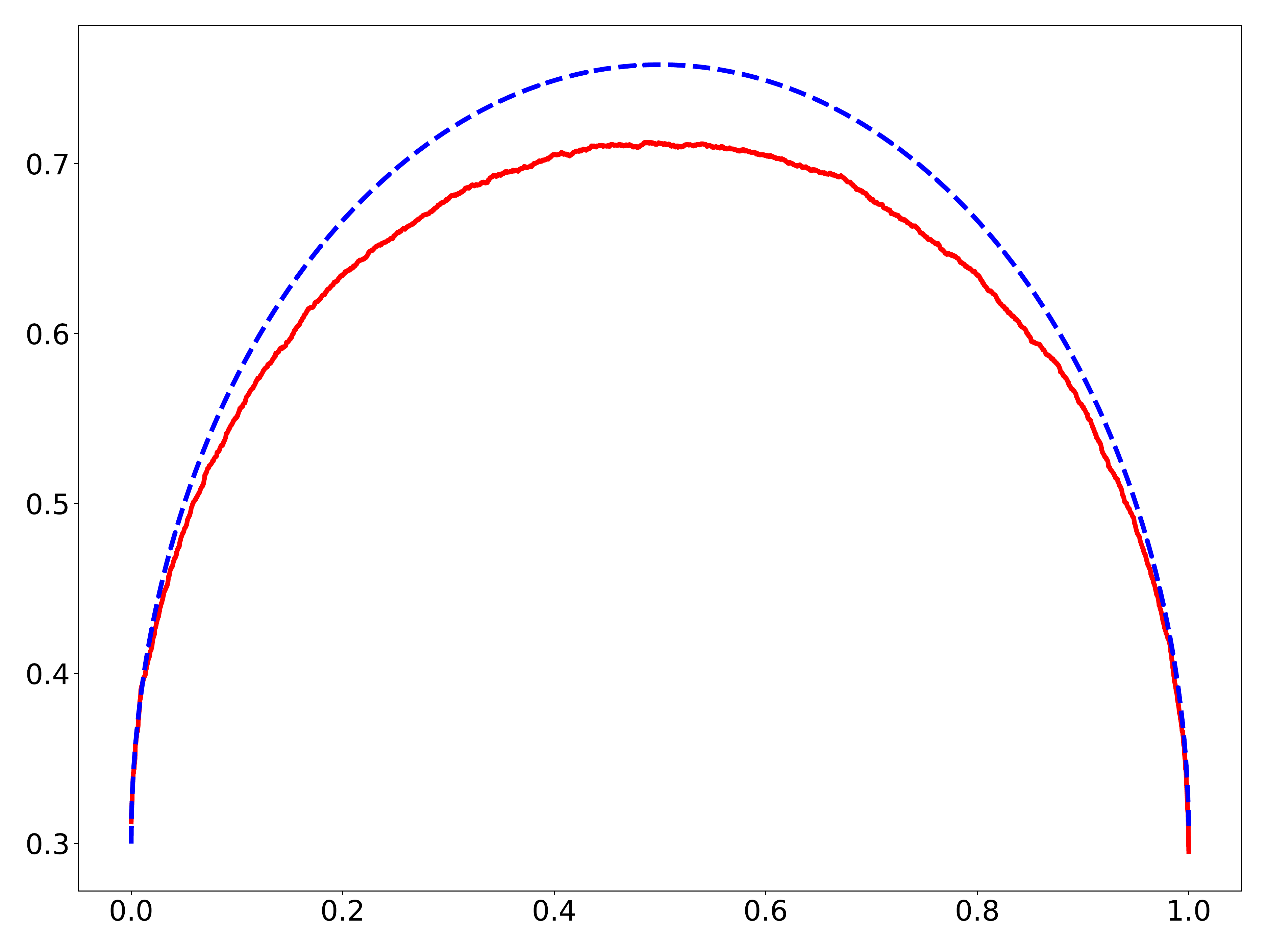}
            \caption{$F =\Bernoulli(.3)$}
    \end{subfigure}
    \begin{subfigure}[b]{0.32\textwidth}
            \includegraphics[width=\textwidth]{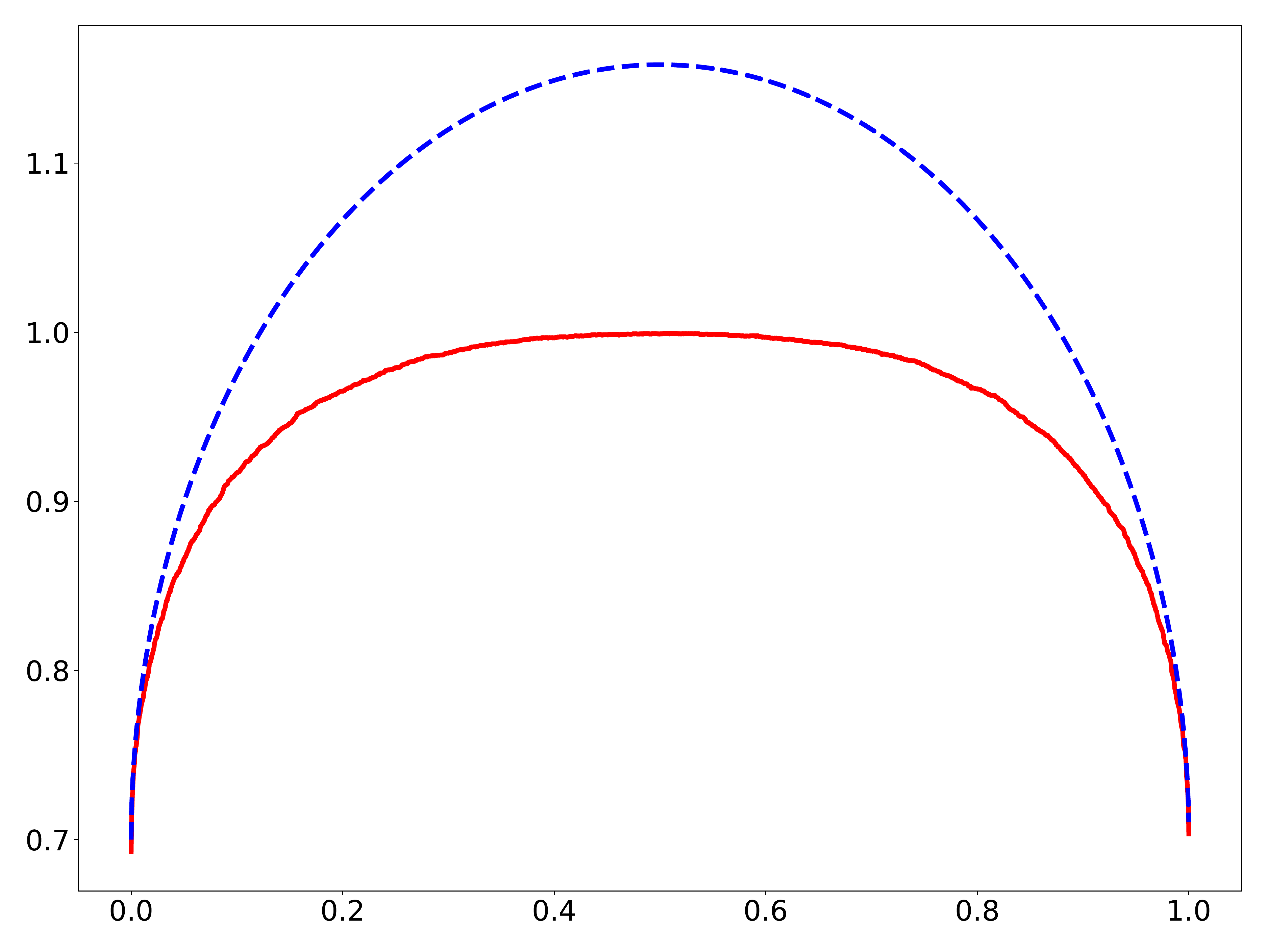}
            \caption{$F =\Bernoulli(.7)$}
    \end{subfigure}
    \begin{subfigure}[b]{0.32\textwidth}
        \includegraphics[width=\textwidth]{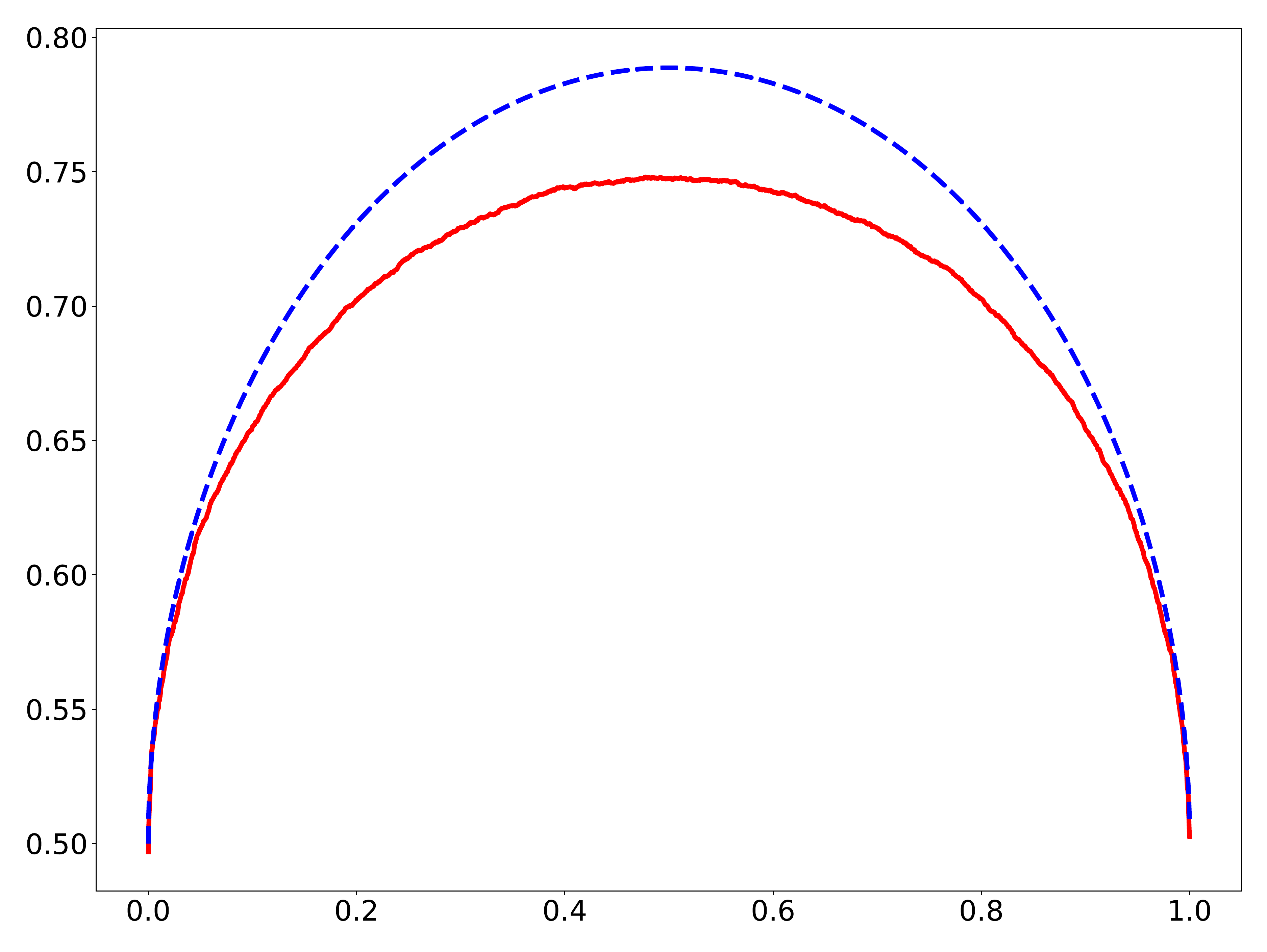}
    \caption{$F =\text{Unif}[0,1]$} 
    \end{subfigure}
    \newline
    \begin{subfigure}[b]{0.32\textwidth}
        \includegraphics[width=\textwidth]{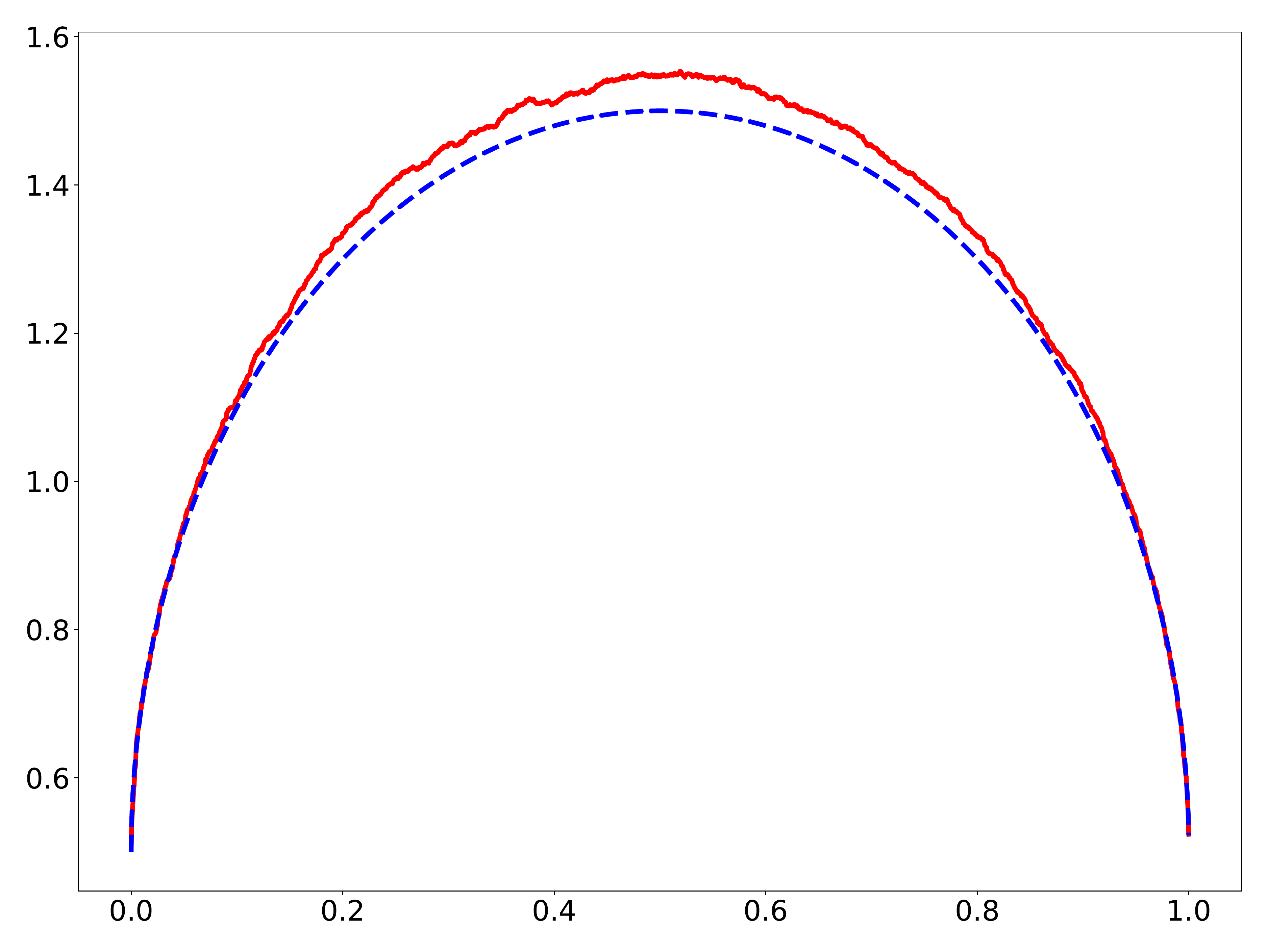}
        \caption{$F =\chi^2(.5)$}
\end{subfigure}
\begin{subfigure}[b]{0.32\textwidth}
        \includegraphics[width=\textwidth]{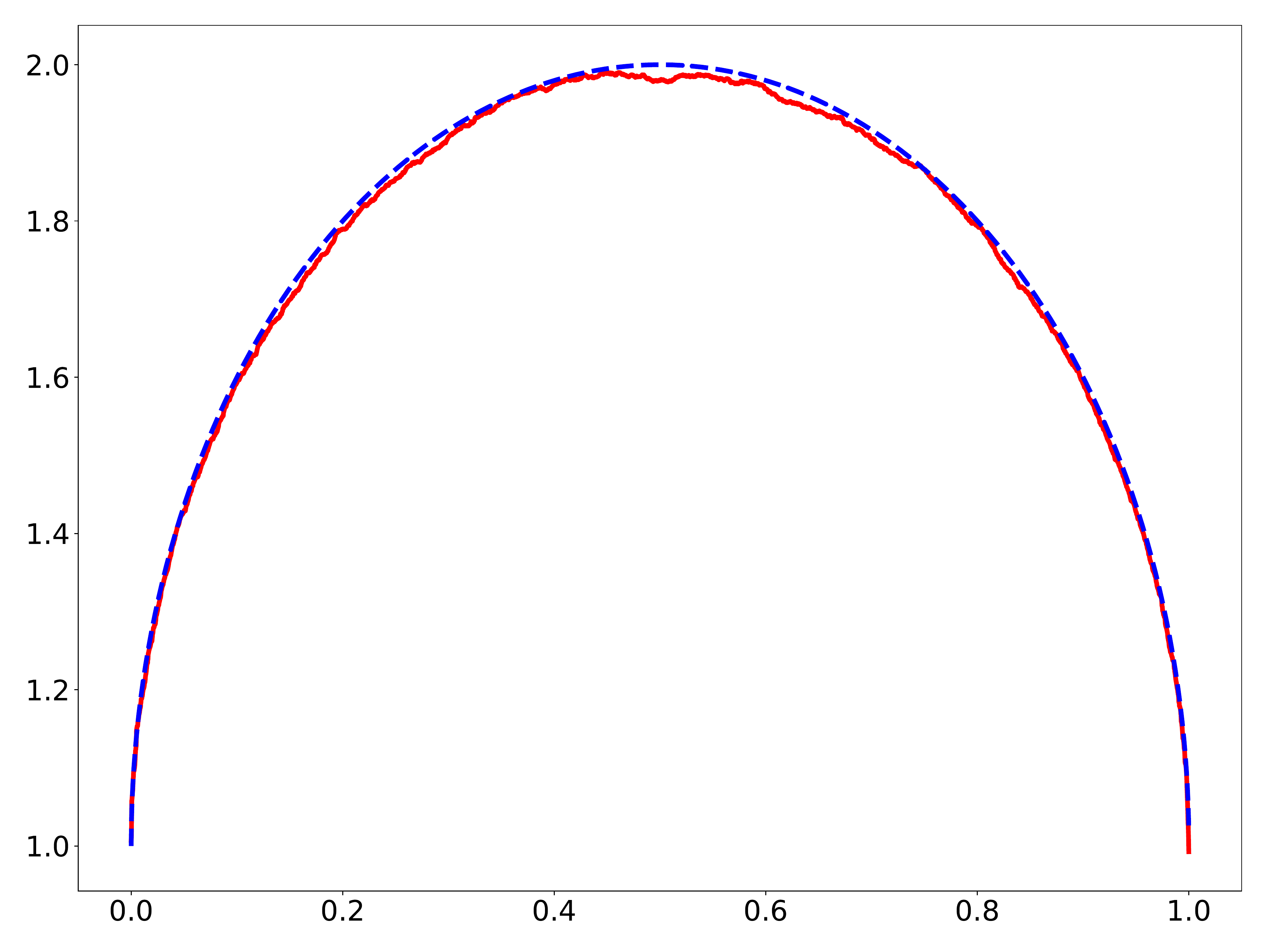}
        \caption{$F =\Exp(1)$}
\end{subfigure}
\begin{subfigure}[b]{0.32\textwidth}
        \includegraphics[width=\textwidth]{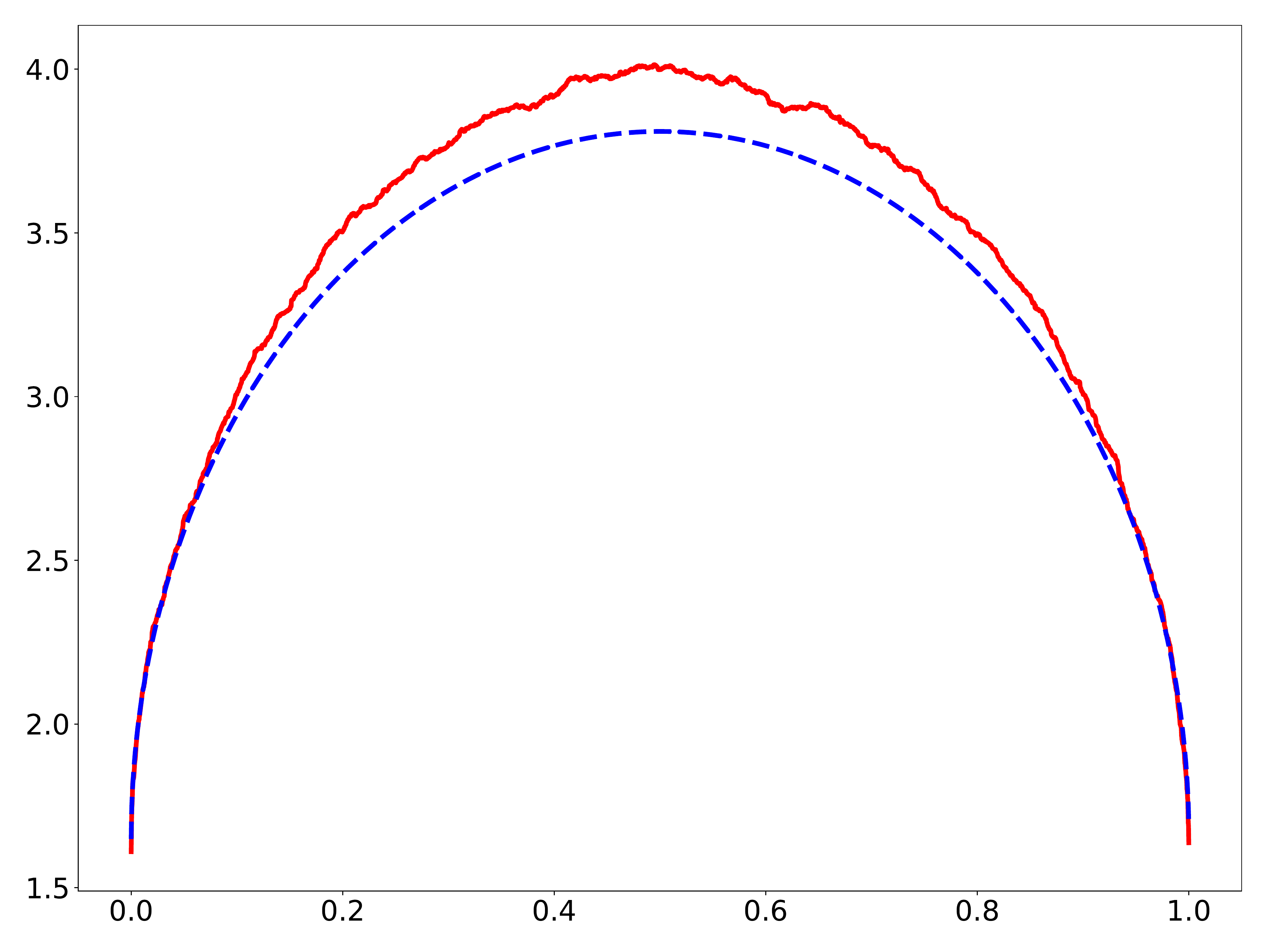}
        \caption{$F =\text{Lognormal}[0,1]$}
\end{subfigure}
    
    \caption{The red curves in the figures show simulated time-constants $g_F(x,1-x)$ for $x \in [0,1]$ for various distributions \(F\). Scaled passage times $G([Nx],[N(1-x)])/N$ for $N = 8000$ were used to approximate $g_F(x,1-x)$. The dashed blue curve shows the time-constant for exponential weights. Note that for the $\Uniform$ and $\Bernoulli$ distributions, we appear to have $g_F \leq \gexp$, and for $\Lognormal$ and $\chi^2(0.5)$ weights, we appear to have $g_F \geq \gexp$.}
    \label{fig:time constants}
\end{figure}

\oldarjun{Could we show that Uniform distribution has this property? We tried comparison inequalities and convex ordering, but nothing appeared to work. Specifically, we tried one could utilize the Berg-Kesten convex dominance method to compare $F_1$ with a $\Bernoulli(p)$ distribution $F_2$ that has support in $[a',b']$ that has the same mean and variance as $F_1$, but $F_1 \ll F_2$}

\begin{remark}
Since our coarse graining method is far from optimal, the criterion fails for exponentially distributed weights. In the exponential case, the covariance of any two distinct Busemann increments on a down/right path is $0$; in fact, they are independent.
\end{remark} 
\section{Proofs}

\subsection{Covariance and the time-constant: Proof of Lemma \ref{lem:negative covariance implied by time-constant dominance}}

Consider a slice of the time-constant
\[\gamma(s) = \begin{cases} g(1,s) \qquad & \text{if}\qquad 0 \leq s < \infty, \\-\infty \qquad & \text{if} \qquad s<0 \end{cases}. \]
In \lpp{} the time-constant is concave and \(-\gamma(s)\) is a convex function. 
Consider the Legendre transform (with a change of coordinates) of \(-\gamma(s)\) given by
\begin{align*}
    f(a) =  \sup_{s>0} \left( -sa  + \gamma(s)\right).
\end{align*}
From the trivial bound $\gamma(s) > m(1+s)$ for $s > 0$, it follows that $f(a)$ is only finite for \(a > m\). 
From Legendre duality, we have
\begin{align}\label{eq:legendre dual for time-constant gamma s}
    \gamma(s) & = \inf_{a > m} \left( sa + f(a) \right).
\end{align}
Since $g(\lambda x) = \lambda g(x)$, we must have $\partial_{\lambda} g(\lambda x) = \grad g(\lambda x) \cdot x$, and hence $g(x) = \grad g(\lambda x) \cdot x$ for all $\lambda > 0$. In terms of $\gamma(s)$, this translates to $\gamma(s) = \grad g(1,s) \cdot e_1 + s\gamma'(s)$.  From Legendre duality, it follows that when $\gamma(s)$ is differentiable at $s$, we get $f(\gamma'(s)) = \gamma(s) - s \gamma'(s) = \grad g(1,s) \cdot e_1$. Therefore,
\oldarjun{Do we Need a reference for the statement that says that when $\gamma(s)$ is differentiable at $s$, we get $f(\gamma'(s)) = \gamma(s) - s \gamma'(s) = \grad g(1,s) \cdot e_1$}
\[
    \grad g(1,s) = (f(a),a),
\]
where $a = \gamma'(s)$. The set of derivatives of \(\gamma(s)\) are in one-to-one correspondence with gradients of the time-constant. Thus, when we write $a \in \grads$ below, we mean $(f(a),a) \in \grads$, where $\grads$ is the set of gradients of the time-constant.

\begin{lemma}\label{lem: adjacent increments correlation}
    Let \(F\) have mean \(m\) and variance \(\sigma^2\).
    The covariance of all adjacent increments in a pre-Busemann family is \emph{negative}; i.e., $\Cov(B^{u}(0,e_1),B^{u}(e_2,0)) \leq 0$ for all $u \in \indexset$, if and only if
\begin{equation}
    f(a) \leq m + \frac{\sigma^2}{a - m} \qquad \forall a \in \grads.
    \label{eq:condition on legendre dual for positive covariance}
\end{equation}
\end{lemma}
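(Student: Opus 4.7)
The plan is to compute $\Cov(X,Y)$ in closed form, where $X := B^u(0,e_1)$ and $Y := B^u(0,e_2)$, and then translate it into the desired covariance. Additivity gives $B^u(x,x) = 0$ and hence $B^u(y,x) = -B^u(x,y)$, so $B^u(e_2,0) = -Y$ and $\Cov(B^u(e_2,0), B^u(0,e_1)) = -\Cov(X,Y)$. Expectation duality supplies $\E[X] = f(a)$ and $\E[Y] = a$ (where $\phi(u) = (f(a),a)$), while recovery gives $\w_0 = \min(X,Y)$ with $\E[\w_0] = m$ and $\Var(\w_0) = \sigma^2$.

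The core algebraic identity is
\[
    XY \;=\; \min(X,Y)\,\max(X,Y) \;=\; \w_0\bigl(X + Y - \w_0\bigr) \;=\; \w_0^2 + \w_0\,|X - Y|,
\]
using $|X-Y| = \max(X,Y) - \min(X,Y)$. The crucial analytic step is to show that $|X - Y|$ is independent of $\w_0$. For this I would introduce the dual Busemann increments $X^* := B^u(e_2, e_1+e_2)$ and $Y^* := B^u(e_1, e_1+e_2)$, which have the same marginal distributions as $X$ and $Y$ by stationarity. Applying additivity along the two lattice routes $0 \to e_1 \to e_1 + e_2$ and $0 \to e_2 \to e_1 + e_2$ gives $X + Y^* = B^u(0, e_1 + e_2) = Y + X^*$, so $X - Y = X^* - Y^*$, and in particular $|X - Y| = |X^* - Y^*|$. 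Past independence applied to $I = \{e_1, e_2\}$ (whose ``past'' $I^<$ contains the origin) then yields that $(X^*, Y^*)$ is independent of $\w_0$, hence $|X-Y|$ is independent of $\w_0$.

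Taking expectations, and observing that $\E[|X-Y|] = \E[X] + \E[Y] - 2\E[\w_0] = (f(a) - m) + (a - m)$, I would obtain
\[
    \E[XY] \;=\; (m^2 + \sigma^2) + m\bigl((f(a) - m) + (a - m)\bigr),
\]
and therefore $\Cov(X,Y) = \E[XY] - f(a)\,a = \sigma^2 - (f(a) - m)(a - m)$. Since $a > m$ for every $a \in \grads$, this immediately yields the equivalence: $\Cov(X,Y) \geq 0$ holds if and only if $(f(a) - m)(a - m) \leq \sigma^2$, i.e., $f(a) \leq m + \sigma^2/(a-m)$, and then $\Cov(B^u(e_2,0), B^u(0,e_1)) \leq 0$ is just the negation. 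I expect the main obstacle to be spotting the dual increments $X^*, Y^*$ and recognizing that past independence combined with additivity is exactly what decouples $|X-Y|$ from $\w_0$; once that is in place the remainder is elementary algebra.
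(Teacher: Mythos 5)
Your proof is correct and follows essentially the same route as the paper's. Both arguments hinge on the recovery identity $\w_0 = \min(X,Y)$ together with past independence to decouple $\w_0$ from the difference $X-Y = B^u(e_2,e_1)$ (your cocycle identity $X-Y = X^*-Y^*$ is just additivity applied to $B^u(e_2,e_1)$; the paper invokes past independence on $B^u(e_2,e_1)$ directly), and the rest is the same algebra packaged slightly differently---you compute $\E[XY]$ via $XY = \w_0^2 + \w_0\lvert X-Y\rvert$, while the paper writes $X = \w_0 + Z^+$, $Y = \w_0 + Z^-$ with $Z = B^u(e_2,e_1)$ and uses bilinearity plus $Z^+Z^- = 0$, both landing on $\Cov\bigl(B^u(e_2,0),B^u(0,e_1)\bigr) = -\sigma^2 + (f(a)-m)(a-m)$.
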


\begin{proof}
    Fix $a \in \grads$. From \Defref{def:pre Busemann functions}, let $u \in \indexset$ be such that the pre-Busemann function $B^u$ satisfies
    \[
        \E[B^u(0,e_1),B^u(0,e_2)] = (f(a),a).
    \]
    From the additivity property \eqref{eq:busemann function additivity}, we have
    \[
    B^u(x,x+e_2) + B^u(x+e_2,x+e_1) = B^u(x,x+e_1).
    \]
    Inserting this into the recovery property \eqref{eq:busemann function dynamic programming or recovery property}, setting $x = 0$, and rearranging, we get
\begin{align}
    B^{u}(0,e_1)  & = \w_0 + B^{u}(e_2,e_1)^+  \label{eq:dynamic programming busemann increment 1},\\
    B^{u}(0,e_2) & = \w_0 + B^u(e_2,e_1)^- \label{eq:dynamic programming busemann increment 2}.
\end{align}
where $f^\pm = \max(\pm f,0)$. It follows from \eqref{eq:dynamic programming busemann increment 1} and \eqref{eq:dynamic programming busemann increment 2} that
\[
    \E[(B^{u}(e_2,e_1)^+,B^{u}(e_2,e_1)^-] = (f(a) -m,a - m).
\]
Thus, the covariance of $B^{u}(0,e_1) \AND B^{u}(e_2,0)$ can be written as
\begin{equation}
\begin{aligned}
    \Cov(B^{u}(0,e_1),B^{u}(e_2,0)) 
    & = -\Cov(\w_0,\w_0) - \Cov(B^{u}(e_2,e_1)^+,B^{u}(e_2,e_1)^-)\\
    & = - \sigma^2 + \E[B^{u}(e_2,e_1)^+]\E[B^{u}(e_2,e_1)^-]\\
    & = - \sigma^2 + (f(a) - m) (a - m),
\end{aligned}
\label{eq:covariance of adjacent busemann increments}
\end{equation}
using bilinearity of covariance and the fact that $B^{u}(e_2,e_1)$ is independent of the weight $\w_0$ (see \citep[Theorem 3.3]{MR3704769} and (3) in \Defref{def:pre Busemann functions}). Equations \eqref{eq:dynamic programming busemann increment 1},\eqref{eq:dynamic programming busemann increment 2} and \eqref{eq:covariance of adjacent busemann increments} are due to T.\,Seppalainen \citep{seppalainen2018}, who noted that zero-correlation of all adjacent Busemann increments (eq.\ \eqref{eq:covariance of adjacent busemann increments} is identically $0$) implies that the limit-shape must be given by \eqref{eq:exponential-lpp-limit-shape}. Equation \eqref{eq:covariance of adjacent busemann increments} shows that the covariance is negative if and only if 
\begin{align*}
    f(a) & \leq m + \frac{\sigma^2}{a-m}.
\end{align*}
\end{proof}

\begin{remark}
    The proof of \Lemref{lem: adjacent increments correlation} also shows that $\Cov(B^{u}(0,e_1),B^{u}(e_2,0)) \geq 0$ for all $u \in \indexset$ if and only if
\begin{equation}
    f(a) \geq m + \frac{\sigma^2}{a - m}.
\end{equation}
    Thus, the following proof of \Lemref{lem:negative covariance implied by time-constant dominance} shows that covariance $0$ for all adjacent increments is equivalent to the fact that $\gexp(x)$ is the time-constant.
\end{remark}

\begin{proof}[Proof of Lemma \ref{lem:negative covariance implied by time-constant dominance}]
First suppose that all pre-Busemann increments are negatively correlated. Combining \eqref{eq:legendre dual for time-constant gamma s} and Lemma \ref{lem: adjacent increments correlation}, we find
\begin{equation}
    \begin{aligned}
    g_F(1,s) = \gamma(s) & = \inf_{m < a < \infty} \left( f(a) + s a \right) \\
    & \leq \inf_{m < a < \infty} \left(  m + \frac{\sigma^2}{a - m} + s a \right)\\
    & = m(1+s)+ 2\sigma \sqrt{s} = \gexp(1,s).
    \end{aligned}
    \label{eq:inequality of time-constant wrt universal exponential limit shape} 
\end{equation}
By the 1-homogeneity, continuity and symmetry of $g_F$ and $\gexp$, it follows that $g_F(x) \leq \gexp(x) ~\forall x \in \R_{\geq 0}^2$ (see \Propref{prop:extending time constant inequality from 0-1 interval to entire first quadrant} for details).

Next, suppose \(g_F(1,s)\leq \gexp(1,s)\) for all $s \in [0,\infty)$. Then, for $m < a < \infty$, we have
\begin{align*}
    f(a)& = \sup_{s>0} \left(\gamma(s) - sa \right) \\
    & \leq \sup_{s>0} \left(  m(1+s)+ 2\sigma \sqrt{s} - sa \right) \\
    & = m + \frac{ \sigma^2}{a-m}
\end{align*}
\end{proof}

\subsection{Coarse graining argument: Proof of Theorem \ref{thm:rate function criterion for time-constant dominance}}
Recall that $I$ is the large deviation rate function for i.i.d.\ sums of $F$. In this section, we prove Theorem \ref{thm:rate function criterion for time-constant dominance}, which states that if for any $s \in (0,\infty)$ and $K > m$, we have
\begin{align*}
    \frac{\log(4)s}{(1+s)} < I\left(\frac{K}{1+s} \right),
\end{align*}
then \(g_F(s) \leq K\). 

Let $s \in (0,\infty)$. Since $\log(4)s/(1+s) < I(K/(1+s))$, choose $r \in \Q_{>0}$ such that 
\begin{equation}
    \frac{\log (4) (s+r)}{(1+s)(1+r)} 
    < I\left(\frac{K}{1+s} \right).
    \label{eq:choose r small enough}
\end{equation}
We define an event on which the weights in $[0,N] \times [0,Ns]$ are not too large:
\begin{align}
   \good_N =\bigcup_{\substack{1\leq i \leq N \\ 1 \leq j \leq Ns }}\{\omega: -b_N \leq \omega_{ij}\leq b_N\}. \label{eq: definition good}
\end{align}
Since \(F\) has mgf \(M(t)\) which is finite for \(t\in (-\delta, \delta)\) where \(\delta>0\), applying a union bound to \eqref{eq: definition good} gives
\begin{align}
    P\left(\good_N^C \right) \leq cN^2 se ^{-\lambda b_N}
    \label{eq:bound on good complement using mgf}
\end{align}
for constants \(\lambda>0\) and \(c>0\) that only depend on $F$.

Let \(\path_N\) be the set of all up/right paths from \((0,0)\) to \(N(1,s)\). Consider the event
\begin{align}\label{eq:naive set}
    A_N = \bigcup_{\Gamma \in \path_N}\{ \omega \in \Omega:  G(\Gamma)> N  K\}.
\end{align}
If \(\lim_{N\to\infty} P (A_N)  =0\), then we have
\begin{align}
g_F(s) \leq K.
\end{align}
To show that $A_N$ has vanishing probability as $N \to \infty$, we use a coarse graining argument to reduce the number of allowed paths in \(A_N\) (entropy reduction), and then use a union bound. This strategy is inspired by \citet{MR3580031}. 

Let \(M\) be a positive integer (to be fixed after the proof of \Lemref{lemma: good path}) such that $Mr$ is an integer as well. For \(k\in \Z_{\geq 0}\) define the anti-diagonal lines \(\antidiaglines_k=\{ (x,y)\in \R^2:  y=r (k M-x)\}\), and \(\antidiaglines = \cup_{k\in \Z_{\geq 0}} \antidiaglines_k\).
 Let \(L \leq M\) be any positive real number such that $Lr$ is an integer, and let \(C_g\) be the \textbf{coarse grid} consisting of points in \(\antidiaglines \cap \Z^2\) a diagonal distance \(L\sqrt{1+r^2}\) apart; i.e., 
\[
    C_g=\{ p \in \antidiaglines \cap \Z^2:  |p-(kM,0)|_1 = q L (1 + r) \text{ for } k,q \in \Z_{\geq 0} \}. 
\]
Here we use \(|x|_p\) to denote the \(\ell^p\) norm.

Define the \textbf{free zone} \(F_g\) with 
\[
    F_g = \bigcup_{k \geq 1} F_g^k := \bigcup_{k \geq 1} \{(x,y)\in \Z^2 : r(kM-x) \leq y \leq r(kM-x)+Lr \},
\]
and define two extra sets of anti-diagonal lines that flank each $\antidiaglines_k$:
$$
    \antidiaglines_k^\pm := \{ (x,y) \in \R_{\geq 0}^2 \colon y = r (kM - x) \pm L r \} \quad k \in \Z_{\geq 0}
$$
$F_g^k$ is the set of lattice points between $\antidiaglines_k$ and $\antidiaglines_k^+$.

We form another set of paths $\pathp_N$ by considering up/right paths from $(0,0)$ to $(N,Ns)$ that 
\begin{enumerate}
    \item only intersect lines in $\antidiaglines$ at coarse grid points $C_g$ except for the final point $(N,Ns)$, and
    \item are up/right everywhere except for the free zones, where they are allowed to move in all 4 directions and have $\ell^1$ length at most \(2L\) in each \(F_g^k\).
\end{enumerate}
\begin{figure}[!htbp]
    \centering
    \includegraphics[width=0.5 \linewidth]{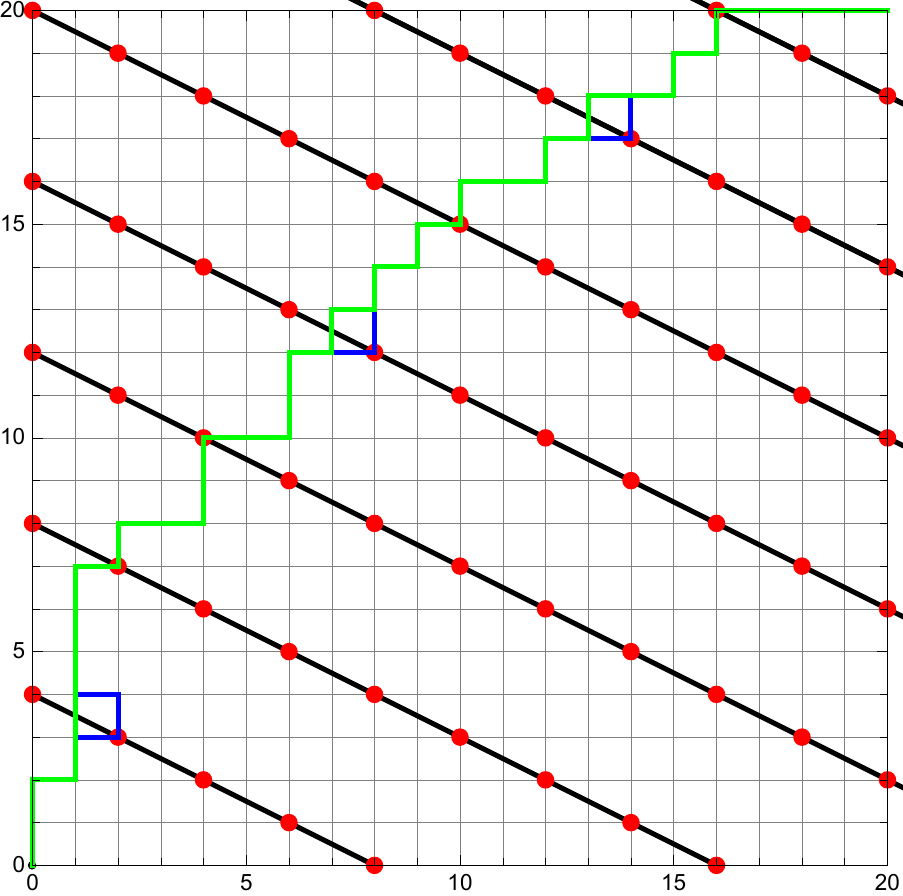}
    \caption{Coarse grid in red with parameters \(r=1/2, M =8, L=2 \). $\Gamma \in \path_N$ is shown in green. The modifications in blue result in a path $\Gamma' \in \pathp_N$: $\Gamma'$ follows the green path $\Gamma$ until it encounters a blue modification and takes that instead until it rejoins the green path.}
    \label{fig:coarse-grid}
\end{figure}

\begin{figure}[!htbp]
    \centering
    \includegraphics[width=0.5 \linewidth]{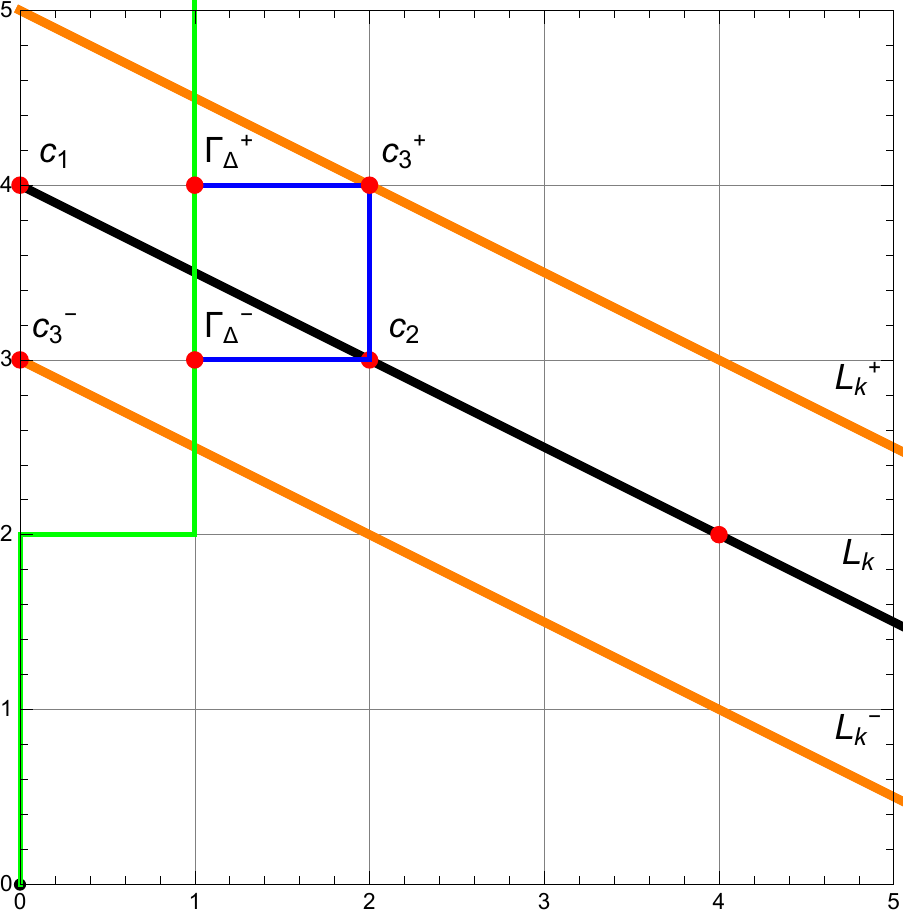}
    \caption{A zoomed-in view of the construction of $\Gamma'$ between the lines $\antidiaglines_k^-$ and $\antidiaglines_k^+$.}
    \label{fig:coarse-grid-wrong}
\end{figure}

For each \(\omega \in A_N\), by definition, there exists a path \(\Gamma \in \path_N\) with \(G(\Gamma)> N K\). This path can be modified between the lines $\antidiaglines_k^-$ and $\antidiaglines_k^+$ to obtain a path \(\Gamma' \in \pathp_N\). This construction is illustrated in Figures \ref{fig:coarse-grid} and \ref{fig:coarse-grid-wrong}: $\Gamma$ is modified so that $\Gamma'$ passes through a coarse grid point, and then takes a detour in the free-zone to rejoin $\Gamma$. We describe the construction precisely next. 

We define \(\Gamma'\) using an inductive construction over the lines \(\antidiaglines_k\). Let $\Gamma' = \Gamma$ until $\Gamma$ reaches $\antidiaglines_1^-$. Fix \(k \geq 1\) and assume $\Gamma$ and $\Gamma'$ have been defined up to the line $\antidiaglines_k^-$, and $\Gamma \cap \antidiaglines_k^- = \Gamma' \cap \antidiaglines_k^-$. Let $\Gamma_k, \Gamma_k^\pm$ be the points at which $\Gamma$ intersects $\antidiaglines_k$ and $\antidiaglines_k^\pm$ respectively. If $\Gamma_k \in C_g$, let $\Gamma' = \Gamma$ from $\antidiaglines_k^-$ to $\antidiaglines_{k+1}^-$. If not, we construct $\Gamma'$ between $\antidiaglines_k^-$ and $\antidiaglines_k^+$ as follows. Let $c_1,c_2$ be points in $C_g \cap \antidiaglines_k$ that are on either side of $\Gamma_k$, where $c_1$ has smaller $x$-coordinate. 

Let $c_3^- = c_2 - e_1  L r$. Consider the triangle formed by $c_1,c_2 \AND c_3^-$, and let $\Gamma_\triangle^-$ be the point at which $\Gamma$ intersects the sides $[c_3^-,c_2)$ or $(c_3^-,c_1)$. Let us consider the case where $\Gamma$ intersects $[c_3^-,c_2)$; the case where it intersects $(c_3^-,c_1)$ can be handled analogously. Let $\Gamma'$ follow $\Gamma$ from $\Gamma_k^-$ to $\Gamma_\triangle^-$ and then go horizontally in the $e_1$ direction to $c_2$. Similarly, we consider the triangle formed by the points $c_3^+ := c_1 + e_1  L r$, $c_1 \AND c_2$. Let $\Gamma_\triangle^+$ be the point where $\Gamma$ exits the triangle formed by $c_1,c_2, \AND c_3^+$. From $c_2$, $\Gamma'$ takes the shortest path along the sides $(c_1,c_3^+]$ and $(c_2,c_3^+)$ until it meets $\Gamma_\triangle^+$. Thereafter, $\Gamma'$ follows $\Gamma$ until it reaches $\antidiaglines_{k+1}^-$, and this completes the induction step. Finally, if $(N,Ns)$ falls between an $\antidiaglines_k^-$ and $\antidiaglines_k^+$, we just have $\Gamma'$ follow $\Gamma$.

The next lemma shows that replacing $\Gamma$ by $\Gamma'$ does not change the passage time substantially. Proposition \ref{prop:diagonal-lines} shows that \(\Gamma\) crosses a total of \( \frac{N (r+s)}{Mr} \) lines $\antidiaglines_k$ in \(\antidiaglines\). The modified path \(\Gamma'\) could incur a detour of length at most \(2L\) between each $\antidiaglines_k^- \AND \antidiaglines_k^+$, and so \(\Gamma'\) has length at most
\begin{align}
 |\Gamma ' |  \leq N(1+s) +2L \frac{N (r+s)}{Mr}.
 \label{eq:length of a gamma prime}
\end{align}
Since $\Gamma'$ does not coincide with $\Gamma$ only between $\antidiaglines_k^-$ and $\antidiaglines_k^+$, on the event $\good_N$, we have
\begin{equation}
     G(\Gamma') \geq G(\Gamma) - 2L \frac{N(r+s)}{Mr} (b_N - (-b_N)).
     \label{eq:estimate for passage time on modified path}
\end{equation}

Inspired by \eqref{eq:estimate for passage time on modified path}, we define the event \(\tilde A_N\) where
\begin{align} \tilde A_N =\left \{ \omega: \exists \Gamma' \in \pathp_N \text{ with } G(\Gamma')> N \left(K - 4L \frac{(r +s)}{Mr}b_N \right) \right\}.
\end{align}

\begin{lemma}\label{lemma: good path}
    We have \(A_N \cap \good_N \subseteq \tilde A_N \cap \good_N\) and thus \(P(A_N) \leq P( \tilde A_N) + c s N^2 e^{-\lambda b_N}\) for positive constants \(c,\lambda\) from \eqref{eq:bound on good complement using mgf}.
\end{lemma}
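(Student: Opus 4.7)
The plan is to establish the set containment $A_N \cap \good_N \subseteq \tilde A_N$ directly, using the explicit construction $\Gamma \mapsto \Gamma'$ described in the paragraphs preceding the lemma; the probability bound will then follow from a one-line union-bound argument using \eqref{eq:bound on good complement using mgf}. Fix $\omega \in A_N \cap \good_N$ and pick $\Gamma \in \path_N$ with $G(\Gamma) > NK$, so that the associated $\Gamma' \in \pathp_N$ coincides with $\Gamma$ outside the strips bounded by $\antidiaglines_k^-$ and $\antidiaglines_k^+$ and differs from it only inside these strips (between $\Gamma_\triangle^-$ and $\Gamma_\triangle^+$).

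The key step is to bound $|G(\Gamma) - G(\Gamma')|$ in terms of $b_N$. By \Propref{prop:diagonal-lines}, $\Gamma$ crosses at most $N(r+s)/(Mr)$ anti-diagonal lines $\antidiaglines_k$, so there are at most that many modification strips. Within each strip, the detour portion of $\Gamma'$ has $\ell^1$-length at most $2L$ by the definition of $\pathp_N$, and the removed subpath of $\Gamma$ also has $\ell^1$-length $O(L)$ since it is an up/right path trapped in the triangle with vertices $c_1,c_2,c_3^{\pm}$ of diameter proportional to $L$ (the spacing between consecutive coarse grid points on $\antidiaglines_k$ being $L(1+r)$). Hence the number of vertices in each of $\Gamma \setminus \Gamma'$ and $\Gamma' \setminus \Gamma$ is bounded by $2L \cdot N(r+s)/(Mr)$. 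On $\good_N$ every weight $\omega_v$ lies in $[-b_N, b_N]$, so writing
\[
    G(\Gamma') - G(\Gamma) = \sum_{v \in \Gamma' \setminus \Gamma} \omega_v - \sum_{v \in \Gamma \setminus \Gamma'} \omega_v
\]
and applying the vertex count with $|\omega_v|\leq b_N$ to each sum yields
\[
    G(\Gamma') \geq G(\Gamma) - 2b_N \cdot 2L \cdot \frac{N(r+s)}{Mr} > N\!\left(K - 4L \frac{r+s}{Mr}\, b_N\right),
\]
which is precisely \eqref{eq:estimate for passage time on modified path}. Therefore $\omega \in \tilde A_N$, as required.

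For the probability estimate I decompose
\[
    P(A_N) = P(A_N \cap \good_N) + P(A_N \cap \good_N^c) \leq P(\tilde A_N) + P(\good_N^c) \leq P(\tilde A_N) + c s N^2 e^{-\lambda b_N},
\]
where the last inequality is \eqref{eq:bound on good complement using mgf}.

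The main obstacle is the per-strip vertex count for the symmetric difference of $\Gamma$ and $\Gamma'$. The $2L$ bound on the length of the detour is immediate from the definition of $\pathp_N$, but one also needs a matching $O(L)$ bound on the length of the removed portion of $\Gamma$; this is geometric, relying on the coarse grid spacing $L(1+r)$ and the containment of $\Gamma$ between $\Gamma_\triangle^-$ and $\Gamma_\triangle^+$ within the union of the two triangles with apexes $c_3^{\pm}$. All other pieces of the argument are routine bookkeeping.
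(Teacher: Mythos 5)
Your proof is correct and follows the same route as the paper's. The paper's own proof of this lemma is shorter only because it had already established the passage-time estimate \eqref{eq:estimate for passage time on modified path} in the preceding text and simply cites it, whereas you re-derive that estimate inline by splitting $G(\Gamma')-G(\Gamma)$ over the symmetric difference $\Gamma \triangle \Gamma'$, bounding the number of modified vertices per strip by $O(L)$ via the geometry of the coarse grid, and using $|\omega_v|\leq b_N$ on $\good_N$. The final decomposition $P(A_N)=P(A_N\cap\good_N)+P(A_N\cap\good_N^c)\leq P(\tilde A_N)+c s N^2 e^{-\lambda b_N}$ is exactly the paper's. The extra observation you make explicit---that the removed portion of $\Gamma$ (not only the inserted detour) must also have $\ell^1$-length $O(L)$, because it is confined to the triangles with apexes $c_3^\pm$ whose diameter scales with the coarse-grid spacing $L(1+r)$---is correct and is in fact the point the paper treats somewhat loosely when it writes the factor $2L\cdot(b_N-(-b_N))$; your version of the accounting is cleaner.
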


\begin{proof}
    For \( \w \in A_N \cap \good_N\), consider any \(\Gamma \in \path_N\). By the construction described above, there is a corresponding path \(\Gamma' \in \pathp_N\). By \eqref{eq:estimate for passage time on modified path}, it follows that  \(\omega \in \tilde A_N \cap \good_N\). Thus, 
    \begin{align*}
        P(A_N) 
        & = P(A_N \cap \good_N) + P (A_N \cap \good_N^c) \\
        & \leq P(\tilde A_N \cap \good_N )+ P( \good_N^c)\\
        & = P(\tilde A_N )+ c s N^2 e^{-\lambda b_N}.
    \end{align*}
\end{proof}

Let \(\alpha,\beta,\gamma >0\) with \(\alpha + \gamma< \beta\), let $b_N =  N^{\gamma}$, and let $L$ and $M$ be the smallest integers larger that \(\lfloor N^{\alpha} \rfloor \AND \lfloor N^{\beta} \rfloor\) respectively, such that $Lr$ and $Mr$ are integers.  From \eqref{eq:length of a gamma prime}, $n = |\Gamma'|$ satisfies the bound
    \begin{align}
        N(1+s) \leq n
        & \leq  N(1+s) + O( N^{1+ \alpha- \beta} ).
    \end{align}

Using a union bound over all paths in \(\pathp_N\) gives 
\begin{align}\label{eq:coarse-grid-inequality}
    & P( \tilde A_N) \\
    & \leq \big | \pathp_N \big| \max_{N(1+s) \leq n \leq N(1+s) + O( N^{1-\delta} )} P \left( \frac{1}{n} \sum_{i=1}^{n} X_i  \geq \frac{N}{n} \left( K - 4Lb_N \frac{ (s+r)}{Mr} \right) \right),\nonumber\\ 
    & \leq \big | \pathp_N \big| \sup_{N(1+s) \leq n } P \left( \frac{1}{n} \sum_{i=1}^{n} X_i  \geq \frac{K}{1+s} - O(N^{\alpha+\gamma-\beta}) \right),
\end{align}
where $X_i$ are iid with distribution $F$, and $\delta = 1 + \alpha - \beta < 1$. We estimate the term \(| \pathp_N \big|\) next.
\begin{prop}
    For fixed $s > 0$, $r \in \Q_{> 0}$, and $L$ and $M$ as described above,
    \[
        \frac{1}{N} \log \big | \pathp_N \big | = \frac{ s+r }{1+r} \log (4) + O(N^{-(\beta - \alpha)}).
    \]
    \label{prop:count for number of coarse-grained paths}
\end{prop}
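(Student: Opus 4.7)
The plan is to count paths in $\pathp_N$ by decomposing each $\Gamma' \in \pathp_N$ into three independent pieces: (i) the sequence of coarse grid intersections $p_k \in \antidiaglines_k \cap C_g$, (ii) the free-zone detour inside each $F_g^k$ (a 4-directional walk of $\ell^1$-length at most $2L$), and (iii) the strictly up/right lattice segments between consecutive coarse grid visits. The main exponential growth of $|\pathp_N|$ will come from (iii); pieces (i) and (ii) will contribute only to the error $O(N^{-(\beta - \alpha)})$.

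First I would bound (i) and (ii). From each coarse grid point on $\antidiaglines_k$, the up/right constraint admits at most $M/L + 1$ coarse grid successors on $\antidiaglines_{k+1}$ (parametrized by $m_k = j_{k+1} - j_k \in \{0, 1, \ldots, M/L\}$), giving at most $(M/L+1)^{k_0}$ valid coarse grid sequences with $k_0 = N(r+s)/(Mr) + O(1)$. Its logarithm divided by $N$ is $O(N^{-\beta} \log N)$. Each free zone contributes at most $4^{2L}$ detour possibilities, so $4^{2Lk_0}$ overall, and its logarithm divided by $N$ is $O(N^{-(\beta - \alpha)})$. Both are absorbed into the claimed error term.

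The main contribution (iii) comes from the up/right segment counts. Between $p_k = (kM - j_k L, j_k L r)$ and $p_{k+1}$, the number of up/right paths equals $\binom{\Delta x_k + \Delta y_k}{\Delta y_k}$, where $\Delta x_k = M - m_k L$ and $\Delta y_k = m_k L r$. I would bound each such binomial by the central binomial coefficient at the balanced choice $m_k^* = M/(L(1+r))$, where $\Delta x_k = \Delta y_k = Mr/(1+r)$, so that $\binom{2Mr/(1+r)}{Mr/(1+r)} \leq 4^{Mr/(1+r)}$. Multiplying over the $k_0$ segments and combining with the bounds from (i) and (ii) gives
\[
|\pathp_N| \leq (M/L + 1)^{k_0} \cdot 4^{k_0 Mr/(1+r)} \cdot 4^{2Lk_0} = \exp\!\left( N \tfrac{s+r}{1+r} \log 4 + O(N^{1-(\beta-\alpha)}) \right).
\]
The matching lower bound will follow from choosing $m_k \approx m_k^*$ at each coarse grid visit (perturbing $o(k_0)$ of them to meet the terminal constraint $\sum_k m_k = Ns/(Lr)$) and counting the $\binom{2Mr/(1+r)}{Mr/(1+r)} \gtrsim 4^{Mr/(1+r)}/\sqrt{Mr/(1+r)}$ up/right paths in each balanced segment.

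The main obstacle will be to show that the central-binomial bound at $m_k^*$ remains effective at the level of the product over $k_0$ segments, given the global constraint $\sum_k m_k = Ns/(Lr)$ imposed by the terminal point. This coupled optimization is the core difficulty; I would address it via a concavity argument on the per-segment log-count, using the flexibility provided by the detours in the free zones to absorb any deficit arising from the endpoint constraint, which is precisely why the detour length is taken to be $2L$ rather than $L$.
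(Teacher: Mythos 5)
Your upper-bound argument is essentially the paper's: decompose $\Gamma'$ into the coarse-grid sequence, the free-zone detours, and the up/right segments between grid points, then bound each segment's count by the ``central binomial'' $\binom{2Mr/(1+r)}{Mr/(1+r)}$, multiply over the $k_0 = N(s+r)/(Mr)$ segments, and absorb the sequence count $(M/L+1)^{k_0}$ and the detour count $4^{2Lk_0}$ into the error. That part lines up.

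The additional lower-bound step is where you depart from the paper, and it does not work as stated. You claim that choosing $m_k \approx m_k^* = M/(L(1+r))$ for each segment, and then ``perturbing $o(k_0)$ of them,'' reaches the terminal constraint $\sum_k m_k = Ns/(Lr)$. But
\[
    k_0\, m_k^* = \frac{N(s+r)}{Mr}\cdot\frac{M}{L(1+r)} = \frac{N(s+r)}{Lr(1+r)},
\]
which equals the target $Ns/(Lr)$ only when $s=1$. For $s\neq 1$ the discrepancy is $\frac{N(s-1)}{L(1+r)}$, i.e.\ $\Theta(N/L)$ in $\sum m_k$ and $\Theta(N)$ in the corresponding $x$-displacement. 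That deficit cannot be met by perturbing $o(k_0)$ of the $m_k$'s (the per-segment shift needed is $\Theta(M/L)$, the entire range of $m_k$), nor can it be absorbed by the free zones, which provide at most $2Lk_0 = O(N^{1-(\beta-\alpha)}) = o(N)$ of total displacement. In fact, no matching lower bound of this form should be expected: the paper's proof establishes only an upper bound, which is all that is used in Theorem~\ref{thm:rate function criterion for time-constant dominance}; the ``$=$'' in the statement of Proposition~\ref{prop:count for number of coarse-grained paths} is best read as ``$\leq$.''

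One further caution, which applies to your upper bound and to the paper alike: the assertion that $\binom{x + r(M-x)}{x}$ is maximized at the balanced choice $x = rM/(1+r)$ is only correct when $r=1$. Differentiating $\log\binom{x+r(M-x)}{x}$ (via Stirling) gives, at $x=rM/(1+r)$, the value $(1-r)\log 2$, which is nonzero for $r\neq 1$; the true maximizer lies elsewhere and the maximum is strictly larger. (For instance, with $M=10$, $r=1/2$, the balanced value is about $\binom{20/3}{10/3}\approx 25$, while $\binom{7}{4}=35$ at $x=4$.) So the per-segment bound $T_M\leq \frac{M}{L}\binom{2Mr/(1+r)}{Mr/(1+r)}$ needs a corrected constant. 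This does not affect the comparison to the paper — you are faithfully reproducing its step — but it is worth flagging since it changes the numerical constant in front of $s/(1+s)$ in Theorem~\ref{thm:main theorem criterion of negative covariance of adjacent busemann increments}.
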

\begin{proof}
Fix a path $\Gamma' \in \pathp_N$, and suppose $\Gamma'$ intersects the line $\antidiaglines_k^+$ at a point $x$. $\Gamma'$ lies inside a triangle $\triangle_{M'}$ with coordinates $x, x + M' e_1 \AND x + M'r e_2$, where $M' = M - L$, until it intersects the line $\antidiaglines_{k+1}$, which is the hypotenuse of \(\triangle_{M'}\). $\Gamma'$ must exit the triangle at a coarse-grained point in $C_g \cap \antidiaglines_{k+1}$. For the purpose of getting an upper bound on the number of up/right paths in $\triangle_{M'}$ that exit at a coarse-grained point, we may translate \(\triangle_{M'}\) to the origin ($x=0$) and replace $M'$ with $M$. Let $T_M$ be the number of such paths in the triangle $\triangle_{M}$. 

In $\triangle_M$, suppose $\Gamma'$ exits at a point $(x,y)\in C_g$ on the hypotenuse. Since paths must take a total of \(x+y = x+ r(M-x) \) steps from the origin to the hypotenuse, the number of possible paths is \( \displaystyle \binom{x+r(M-x)}{x}\). This binomial coefficient is maximized when \(x+r(M-x)=2x\), so there are at most \( \displaystyle \binom{ \frac{2Mr}{1+r}}{\frac{Mr}{1+r}}\) paths to \((x,y)\). Since the coarse grid points are a $\ell^2$ distance \(L \sqrt{1 + r^2}\) apart on the hypotenuse of $\triangle_M$, 
\[T_M \leq \frac{M }{L }\binom{ \frac{2Mr}{1+r}}{\frac{Mr}{1+r}}. \]

Once $\Gamma'$ has intersected $\antidiaglines_{k+1}$, in the free zone \(F_g^k\), $\Gamma'$ has length at most \(2L\) so there are at most \(4^{2L}\) such paths. This is a very crude bound, of course, but it makes no difference asymptotically at $N \to \infty$. Thus, we have accounted for all paths between $\antidiaglines_k^+$ to $\antidiaglines_{k+1}^+$. The number of paths from the origin to $\antidiaglines_1^+$ produces an identical bound.

A coarse-grained path crosses at most \(\frac{N(s+r)}{Mr}\) lines $\antidiaglines_k$, which gives the estimate
\begin{align*}
| \pathp_N | 
& \leq (T_M  4^{2L})^{\frac{N(s+r)}{Mr}} \leq \left(  \frac{M }{L }\binom{ \frac{2Mr}{1+r}}{\frac{Mr}{1+r}} 4^{2L} \right) ^{ \frac{ N (s+r)}{M r}}.
\end{align*}
Define the binary entropy function \(H(p)\) by
\begin{align}\label{eq:entropy function}
    H(p)&=-p\log (p)- (1-p) \log(1-p).
\end{align}
Using Stirling's formula to estimate the binomial coefficients gives
\begin{align*}
\log | \pathp_N | & = \frac{ N (s+r)}{M r} \left( \log \left(\frac{ M }{L}\right ) + \frac{2M r}{1+r} H(1/2) +O(1)+ 2L \log(4) \right)\\
& =  \frac{ N (s+r)}{1+r} \log (4) + O ( N^{1-\beta+\alpha}).
\end{align*}
Since \(\alpha < \beta < 1\) the error terms are all of order less than \(N\).
\end{proof}

Now we have all the tools needed to prove Theorem \ref{thm:rate function criterion for time-constant dominance}.
\begin{proof}[Proof of Theorem \ref{thm:rate function criterion for time-constant dominance}]
From \eqref{eq:coarse-grid-inequality}, 
\begin{align}
    P( \tilde A_N) 
    & \leq  \big | \pathp_N \big| \max_{N(1+s) \leq n} P \left( \frac{1}{n} \sum_{i=1}^{n} X_i  \geq \frac{K}{1+s} - o(1) \right), \label{eq: inequality union bound}
\end{align}
where the $o(1)$ term goes to $0$ as $N \to \infty$.

The standard large deviations estimate (Cram\'er's theorem) \citep{gs2001} applied to the i.i.d.\ sum $n^{-1} \sum_{i=1}^n X_i$ gives
\begin{align*}
\lim_{n \to \infty}  \frac{1}{n} \log P \left( \frac{1}{n} \sum_{i=1}^{n} X_i  \geq x \right) & =  - I\left(x \right).
\end{align*}
where \(I(x)\) is the large deviations rate function \eqref{eq:large deviation rate function definition}. Inserting this into \eqref{eq: inequality union bound} and using the estimate for $|\pathp_N|$ from \Propref{prop:count for number of coarse-grained paths}, we find that as $N \to \infty$,
\begin{equation}\label{eq:final inequality}
    \begin{aligned}
    \frac{1}{N(1+s) } \log P( A_N) \leq  \frac{\log (4) (s+r)}{(1+s)(1+r)} 
    & - I\left(\frac{K}{1+s} + o(1) \right) + o(1).
    \end{aligned}
\end{equation}
The continuity of $I$ and the choice of $r$ in \eqref{eq:choose r small enough} imply that the right hand side of \eqref{eq:final inequality} converges to zero as \(N\to \infty\) and thus \(g_F(s) \leq K\).
\end{proof}
\begin{figure}[!htpb]
    \centering
    \begin{subfigure}[b]{0.32\textwidth}
            \includegraphics[width=\textwidth]{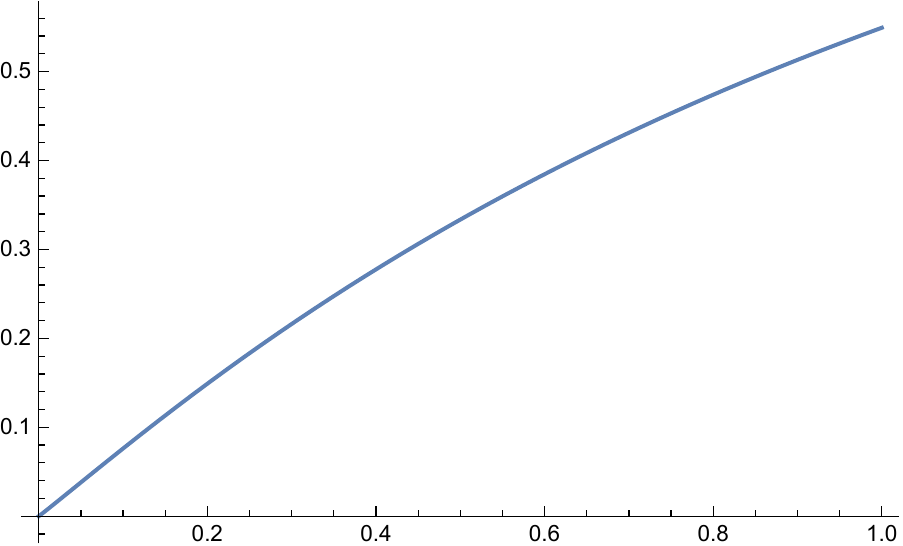}
            \caption{$p=.25$}
    \end{subfigure}
    \begin{subfigure}[b]{0.32\textwidth}
            \includegraphics[width=\textwidth]{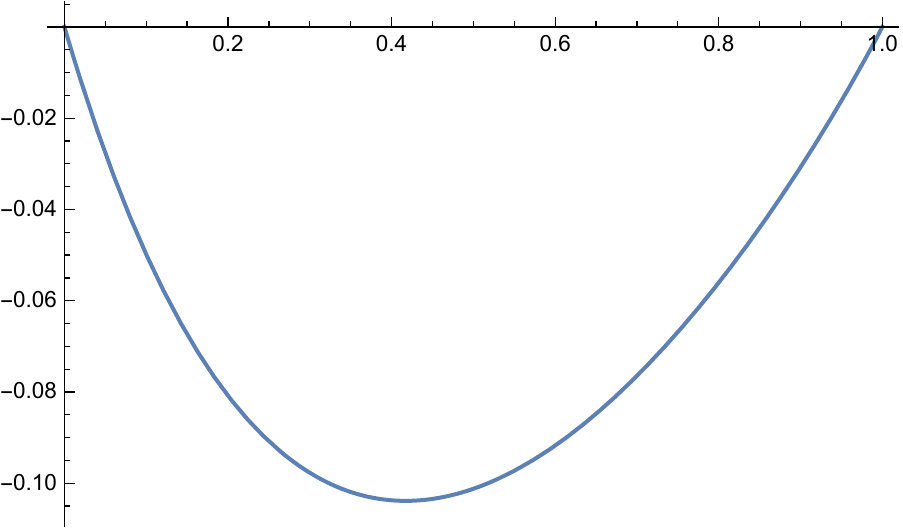}
            \caption{$p=.5$}
    \end{subfigure}
    \begin{subfigure}[b]{0.32\textwidth}
            \includegraphics[width=\textwidth]{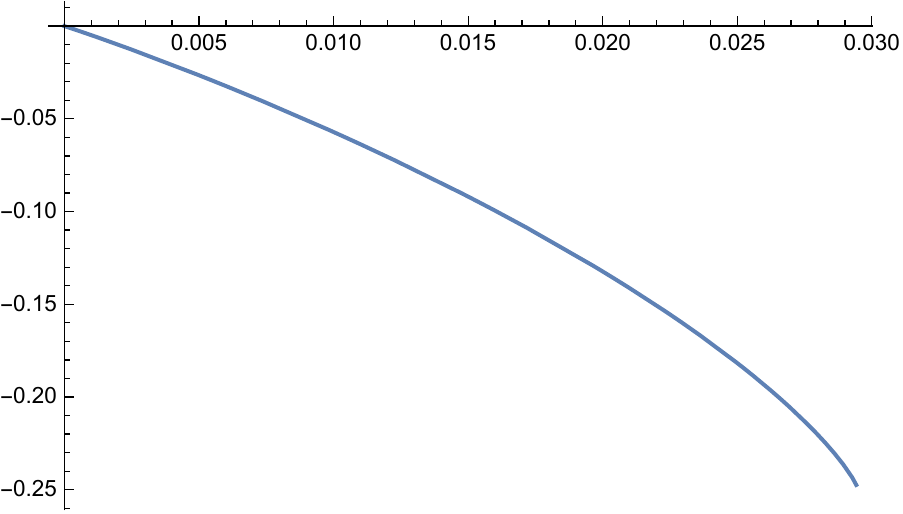}
            \caption{$p=0.75$}
    \end{subfigure}
    \caption{\(\phi(s)\) for different values of \(p\). Notice that the criterion in \Thmref{thm:main theorem criterion of negative covariance of adjacent busemann increments} for Bernoulli weights (see \Propref{prop:bernoulli weights}) is satisfied for $p = 1/2 \AND 3/4$ since $\phi(s) < 0$ for $0 < s < s^*$. The criterion is not satisfied for $p = 1/4$, and more generally (not shown here) for $p < 1/2$. Despite not satisfying the criterion, simulations (\Figref{fig:time constants}) show that $g_F \leq \gexp$ when $F = \Bernoulli(p)$ for $p < 1/2$.}
    \label{fig:coarse-grid-ber-25-r0}
\end{figure}

We used the following elementary count of the numbers lines in $\antidiaglines$ crossed by $\Gamma$ and $\Gamma'$ in the proof of \Thmref{thm:rate function criterion for time-constant dominance}.
\begin{prop}\label{prop:diagonal-lines}
    A path from \((0,0)\) to \(N(1,s)\) intersects a total of  \( { N (r+s)}/{(Mr)} + O(1) \) lines in \(\antidiaglines\).
\end{prop}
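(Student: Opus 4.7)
The plan is to reduce the counting problem to a one-dimensional monotonicity argument using the linear functional that defines the antidiagonal lines. Observe that the line $\antidiaglines_k = \{(x,y) : y = r(kM - x)\}$ is exactly the level set $\{\ell = rkM\}$ of the functional $\ell(x,y) := rx + y$. So the task becomes: count the number of values $k \in \Z_{\geq 0}$ for which the polygonal path from $(0,0)$ to $(N, Ns)$ meets the level set $\{\ell = rkM\}$.

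The key observation is that $\ell$ is strictly monotone along any up/right path: a right step increases $\ell$ by $r > 0$ and an up step increases $\ell$ by $1$. Since neither $e_1$ nor $e_2$ is parallel to $\antidiaglines_k$ (whose direction has slope $-r$), the continuous polygonal path crosses each line $\antidiaglines_k$ at most once. The path starts at $\ell = 0$ and ends at $\ell(N, Ns) = rN + Ns = N(r+s)$, so by continuity of $\ell$ along the polygonal path and monotonicity, the path meets $\antidiaglines_k$ if and only if
\[
    0 \leq rkM \leq N(r+s), \qquad \text{i.e.,} \qquad 0 \leq k \leq \frac{N(r+s)}{rM}.
\]

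The number of nonnegative integers satisfying this inequality is
\[
    \Big\lfloor \frac{N(r+s)}{rM} \Big\rfloor + 1 \;=\; \frac{N(r+s)}{rM} + O(1),
\]
which gives the claimed count.

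There is essentially no obstacle here: the argument is purely linear-algebraic once one writes the lines as level sets of $\ell$. The only minor point to verify is that up/right steps are never parallel to $\antidiaglines_k$ (so crossings are well-defined single intersections), which is immediate because $r > 0$ makes the slope of $\antidiaglines_k$ strictly negative while the allowed step directions have slopes $0$ and $+\infty$.
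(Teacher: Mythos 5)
Your proposal is correct and is, at its core, the same argument the paper gives: the path crosses each antidiagonal line exactly once (monotonicity of crossings), so one only needs to count the lines $\antidiaglines_k$ separating the endpoints, which is $N(r+s)/(rM) + O(1)$. The paper arrives at this count by intersecting the lines with the boundary of the box $[0,N]\times[0,Ns]$, whereas you phrase it directly via the level sets of the functional $\ell(x,y) = rx+y$; your formulation is a cleaner way to make the monotonicity and the counting explicit, but it is not a genuinely different route.
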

\begin{proof}
    An up/right path from \((0,0)\) to \(N(1,s)\) must cross every diagonal line in \(\antidiaglines\) once. Consider the rectangle $[0,N] \times [0,Ns]$. There are \(\lfloor N/M \rfloor\) lines in $\antidiaglines$ that intersect the eastern boundary of the box, and \(\lfloor (Ns)/(rM) \rfloor\) lines that intersect the northern. Therefore the path crosses a total of
    \( \lfloor N/M \rfloor + \lfloor (Ns)/(rM) \rfloor = \frac{N(r+s)}{rM} + O(1) \)
    diagonal lines.
\end{proof}

Next, we prepare to prove \Thmref{thm:main theorem criterion of negative covariance of adjacent busemann increments} by making a simplification that allows us reduce to the case \(0 < s < 1\). 
\begin{prop}
    Let $g_i$, $i=1,2$ be continuous, 1-homogeneous functions on $\R_{\geq 0}^2$ satisfying $g_i(x,y) = g_i(y,x) ~\forall x,y \in \R_{\geq 0}$. If \(g_1(1,s) \leq g_2(1,s)\) for all \(s\in (0,1)\) then $g_1(x) \leq g_2(x)$ for all \(x \in \R_{\geq 0}^2 \). 
    \label{prop:extending time constant inequality from 0-1 interval to entire first quadrant}
\end{prop}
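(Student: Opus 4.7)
The proof is elementary and proceeds by a three-step extension of the hypothesis. The plan is as follows. First, by continuity of $g_1$ and $g_2$, the inequality $g_1(1,s) \leq g_2(1,s)$ extends from $s \in (0,1)$ to the closed interval $s \in [0,1]$. Second, I use the symmetry and 1-homogeneity to cover $s \in [1,\infty)$: for any $s > 1$ we have $1/s \in (0,1)$, so the hypothesis together with $g_i(1,s) = g_i(s,1) = s\, g_i(1,1/s)$ (symmetry followed by 1-homogeneity) gives $g_1(1,s) \leq g_2(1,s)$ on $[1,\infty)$ after multiplying by the positive factor $s$.

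Third, for an arbitrary $(x,y) \in \R_{\geq 0}^2$ with $x > 0$, 1-homogeneity yields $g_i(x,y) = x\, g_i(1, y/x)$, and since $y/x \in [0,\infty)$ the inequality from the previous step applied with $s = y/x$ gives $g_1(x,y) \leq g_2(x,y)$. The boundary case $x = 0$ is handled by symmetry: $g_i(0,y) = g_i(y,0)$, which has already been covered since there the first coordinate is nonnegative (and in fact positive when $y > 0$, with $y = 0$ giving $g_i(0,0) = 0$ by 1-homogeneity). There is no real obstacle here; the only point worth noting is that without the symmetry hypothesis one could not extend from $(0,1)$ to $s > 1$, and without 1-homogeneity one could not descend from slice inequalities to inequalities on the full quadrant, so the three hypotheses of the proposition each play a distinct role.
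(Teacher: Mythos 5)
Your proof is correct and follows essentially the same route as the paper's: extend the slice inequality to $s\geq 1$ via symmetry and 1-homogeneity, then pass to the full quadrant by 1-homogeneity with $s=y/x$, with continuity mopping up the boundary. The only cosmetic difference is that you treat the closed-interval endpoint and the axis cases explicitly up front, whereas the paper defers all boundary issues to a single continuity remark at the end.
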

\begin{proof}
    Suppose \(s>1\). Since \(1/s<1\) we have
    \begin{align*}
        g_{1}(1,1/s) \leq g_2(1,1/s).
    \end{align*}
    Using \(g_i(x,y)=g_i(y,x)\), we get
    \begin{align*}
        g_{1}(1/s,1) \leq g_2(1/s,1).
    \end{align*}
    Using 1-homogeneity, \(g_i(\lambda x,\lambda y)=\lambda g_i(x,y)\) for \(\lambda\geq 0\), and choosing \(\lambda=s\) we conclude 
    \begin{align*}
        g_{1}(1,s) \leq g_2(1,s).
    \end{align*}
    Thus, for arbitrary $x,y \in \R_{> 0}$, we have
    \begin{equation*}
        g_1(x,y) = x g_1(1,y/x) \leq x g_2 (1,y/x) = g_2 (x,y),
    \end{equation*}
    and by continuity, the result extends to $\R_{\geq 0}^2$.
\end{proof}

\begin{proof}[Proof of \Thmref{thm:main theorem criterion of negative covariance of adjacent busemann increments}]
    Since $\log(4) s /(1+s) < I(\gexp(1,s)/(1+s))$ for all $s \in (0,1)$, \Thmref{thm:rate function criterion for time-constant dominance} implies $g_F(1,s) \leq \gexp(1,s) ~\forall s \in (0,1)$. Since $g_F$ and $\gexp$ are continuous \citep[Theorem 2.4]{MR2094434}, 1-homogeneous functions satisfying $g_i(x,y) = g_i(y,x) ~\forall x,y \in \R_{\geq 0}^2$, $i \in \{F,\Exp\}$, \Propref{prop:extending time constant inequality from 0-1 interval to entire first quadrant} completes the proof. 
\end{proof}

\subsection{Failure of convex ordering}
This section contains the short proof of \Propref{prop:failure of convex ordering when matching moments}, that shows that two distinct random variables cannot be comparable in the convex ordering if they have equal first and second moments.
\begin{proof}
    We argue by contradiction. Suppose \(G \ll F\); then, by Theorem 2 in \citep{MR587206}, this is equivalent to the fact that there exists a coupling of $X$ and $Y$  such that \(E[ X | Y] \leq  Y ~\almostsurely\). Since \(E[X]=E[Y]\), the tower property of conditional expectation implies \(E[X| Y] = Y ~\almostsurely\). Applying the conditional Jensen's inequality, we have 
    \begin{align*}
        E[Y^2] & = E[ E[X|Y]^2]   \leq E[ E[X^2|Y] ]   = E[X^2].
    \end{align*}
    However, by assumption, \(E[X^2] = E[Y^2]\) and thus the inequality above must be an equality. Equality in the conditional Jensen's inequality holds if and only if $X = f(Y)$ a.s. Then, \(E[X|Y] = Y\)~a.s.\ implies that $X = Y$ a.s.
\end{proof}

\subsection{Bernoulli weights}
In this section we prove Proposition \ref{prop:bernoulli weights}: We demonstrate that Bernoulli weights with \(p \geq p^*\) ($p^* \approx 0.6504$) satisfy the criterion in Theorem \ref{thm:main theorem criterion of negative covariance of adjacent busemann increments}, and thus have negatively correlated adjacent Busemann increments. The criterion requires
\[
    \log(4) \frac{s}{1+s} - I\left( m + 2 \sigma \frac{\sqrt{s}}{1+s} \right) < 0 \quad \forall s \in (0,1),
\]
where $m = p$ and \(\sigma=\sqrt{p(1-p)}\). For convenience, we define
\begin{align*}
 u_s & = \frac{2 \sigma \sqrt{s}}{1+s}.
\end{align*}

 \begin{prop} \label{prop: bernoulli inequality}
    Let 
    \[\phi(s)=\frac{ \log(4) s}{1+s} - I \left( p +u_s\right).\]
    Let \(0 < s^*(p) < 1\) be the unique solution of \(p+u_{s}=1\) for \(1> p>1/2\) (see \eqref{eq: s star}), and let $p^*$ be the unique solution of $\log(4) = \tfrac{1 + p}{1 + s^*(p)}$ in $(1/2,1)$.
   If \(X\sim \text{Ber}(p)\) with \(p^*<p<1\) then 
   \[\phi(s) \text{ is } \begin{cases} < 0 & \qquad \text{if} \qquad 0< s <s^*(p) \\ = 0 & \qquad \text{if} \qquad s=0 \\ -\infty & \qquad \text{if} \qquad s \geq s^* \end{cases}.\]
    \end{prop}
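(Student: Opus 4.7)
My plan is to dispose of the two boundary regions of the piecewise statement directly and then concentrate the substantive work on the interior $s\in(0,s^{*}(p))$.

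For $s=0$, note $u_{0}=0$ and $I(p)=0$ (since $p$ is the mean of $\Bernoulli(p)$ and $I$ has its zero there), so $\phi(0)=0$. For $s\ge s^{*}(p)$, I would first verify by direct differentiation that $u_{s}=2\sigma\sqrt{s}/(1+s)$ is strictly increasing on $(0,1)$ (one computes $u_{s}'=\sigma(1-s)/(\sqrt{s}(1+s)^{2})>0$). Combined with the definition $p+u_{s^{*}}=1$, this forces $p+u_{s}>1$ for $s\in(s^{*}(p),1]$. Since $\Bernoulli(p)$ is supported on $\{0,1\}$, its large deviation rate function $I$ is $+\infty$ outside $[0,1]$, so $\phi(s)=-\infty$ there (the boundary point is handled by the convention in the statement).

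The substantive part is the interior $s\in(0,s^{*}(p))$. My approach is to obtain a quadratic lower bound on $I$ using convexity and then reduce the claim to a single inequality determined by $p^{*}$. Since $I'(p)=0$ and $I''(a)=1/(a(1-a))$, and because $a\mapsto a(1-a)$ is strictly decreasing on $[1/2,1]$, we have $I''(a)\ge 1/(p(1-p))=1/\sigma^{2}$ for $a\in[p,1]$ (using $p>1/2$). Taylor's theorem with integral remainder then gives $I(p+x)\ge x^{2}/(2\sigma^{2})$ for $x\in[0,1-p]$. Substituting $x=u_{s}$ yields $I(p+u_{s})\ge u_{s}^{2}/(2\sigma^{2})=2s/(1+s)^{2}$, so
\[
\phi(s)\le \frac{\log(4)\,s}{1+s} - \frac{2s}{(1+s)^{2}} = \frac{s\bigl[(1+s)\log(4) - 2\bigr]}{(1+s)^{2}},
\]
which is strictly negative precisely when $s<2/\log(4)-1$.

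It remains to check that $p>p^{*}$ forces $s^{*}(p)<2/\log(4)-1$. Using the implicit equation $4p\,s^{*}=(1-p)(1+s^{*})^{2}$ defining $s^{*}(p)$, implicit differentiation shows that $s^{*}(p)$ is strictly decreasing on $(1/2,1)$ (the denominator $4p-2(1-p)(1+s^{*})$ is positive since $p>(1-p)s^{*}$ on this range, while the numerator $(1+s^{*})^{2}+4s^{*}$ is manifestly positive). Combined with the definition $\log(4)=(1+p^{*})/(1+s^{*}(p^{*}))$, one reads off $s^{*}(p^{*})=(1+p^{*})/\log(4) - 1<2/\log(4)-1$ since $p^{*}<1$; monotonicity then extends this to $s^{*}(p)<2/\log(4)-1$ for every $p\ge p^{*}$, closing the argument. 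The main potential obstacle is matching the precise numerical value of $p^{*}\approx 0.6504$: the Hoeffding/quadratic bound above actually gives $\phi<0$ on $(0,s^{*}(p))$ for a threshold somewhat smaller than $p^{*}$, which is consistent with the authors' own remark that their method is not sharp; the proposition's $p^{*}$ appears to be calibrated to the particular algebraic form of the reduction rather than being optimal.
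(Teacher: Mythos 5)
Your proof is correct and takes a genuinely different route from the paper's. The paper works with $\phi'(s)$: after rewriting $\phi$ via the binary entropy $H$, it differentiates, applies $\log(1+x) > x/2$ and $\log(1-x) < -x$, and integrates the resulting derivative bound to obtain $\phi(s) \le s\bigl[(1+s)\log 4 - (1+p)\bigr]/(1+s)^2$; the negativity criterion $\log 4 < (1+p)/(1+s)$ is precisely the equation defining $p^*$. You instead bound the rate function itself from below: since $I'(p)=0$ and $I''(a)=1/(a(1-a)) \ge 1/\sigma^2$ on $[p,1]$ when $p>1/2$, Taylor with integral remainder gives $I(p+u_s) \ge u_s^2/(2\sigma^2) = 2s/(1+s)^2$, hence $\phi(s) \le s\bigl[(1+s)\log 4 - 2\bigr]/(1+s)^2$, which is negative for $s < 2/\log 4 - 1$. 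You then tie this to $p > p^*$ via the monotonicity of $s^*(\cdot)$ and the identity $s^*(p^*) = (1+p^*)/\log 4 - 1 < 2/\log 4 - 1$. Your bound is strictly stronger, since $-2 < -(1+p)$ for $p<1$; working out the threshold, your quadratic bound would already certify $\phi<0$ on $(0,s^*(p))$ for $p$ down to roughly $0.54$, below the paper's $p^*\approx 0.6504$, confirming your remark that the stated $p^*$ is calibrated to the paper's particular algebraic reduction rather than being optimal. One small imprecision: to justify $4p - 2(1-p)(1+s^*) > 0$ you cite ``$p > (1-p)s^*$,'' but what is actually needed is $3p-1 > (1-p)s^*$; this does follow, since $3p-1 \ge p > (1-p)s^*$ when $p \ge 1/2$ and $s^* < 1$, but it deserves a line. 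The paper avoids this by differentiating the closed-form expression for $s^*(p)$ and applying AM-GM.
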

    
    \begin{proof}
   The rate function \(I(x)\) for Bernoulli weights is 
   \begin{align*}
       I(x) & =   
           \begin{cases} 
               x \log \left(\frac{x}{p} \right) + (1-x) \log \left( \frac{ 1-x}{1-p} \right) & 0 < x  < 1 \\
               -\infty & \text{otherwise}
           \end{cases}
   \end{align*}
   So in our case, we only have to consider $s \in (0,1)$ such that $0 < p + u_s < 1$. For ease of computation we write \(\phi(s)\) in terms of the entropy function defined in \eqref{eq:entropy function}.
   We have
   \begin{align*}
      \phi(s) &=\frac{ \log(4) s}{1+s} - (p+u_s) \log \left(\frac{p+u_s}{p} \right) - (1-(p+u_s)) \log \left( \frac{ 1-(p+u_s)}{1-p} \right)\\
       & =  \frac{ \log(4) s}{1+s} +H(p+u_s)+u_s \log\left( \frac{p}{1-p} \right) -H(p).
   \end{align*}
   Note that \(\phi(0)=-H(p)+H(p)=0.\) We show \(\phi'(s)<0\) for \(0<s<s^*\). Using \(H'(x) = \log \left( \frac{1-x}{x} \right)\), we get 
   \begin{align}
       \phi'(s)&=\frac{\log(4)}{(1+s)^2}+u'_s\left( \log\left( \frac{p}{1-p} \right)  +\log\left( \frac{1-(p+u_s)}{p+u_s} \right) \right) \\
       & = u_s' \left(\frac{ \log(4)\sqrt{s}}{\sigma(1-s)}-\log\left(1+\frac{u_s}{p} \right) +\log\left( 1-\frac{u_s}{1-p} \right) \right). \label{eq:phi prime}
   \end{align}
   
   Now we focus on the term inside the parentheses in \eqref{eq:phi prime}, 
   \begin{align*}
       \psi(s):=\frac{ \log(4)\sqrt{s}}{\sigma(1-s)}-\log\left(1+\frac{u_s}{p} \right) +\log\left( 1-\frac{u_s}{1-p} \right),
   \end{align*}
   and bound this from above.
   Note that the condition for the rate function to be finite is \( p+u_s< 1\) which implies 
   \begin{align*}
   \frac{u_s}{p}< \frac{1-p}{p}<1, \text{ and } \frac{u_s}{1-p}< \frac{1-p}{1-p}<1,
   \end{align*}
   since \(1/2<p<1\). 
   We now use the following two elementary inequalities for the logarithm
   \begin{align*}
   \log(1+x)>  \frac{x}{2}, \quad \log(1-x) <-x  \quad \forall ~ 0<x<1.
   \end{align*}
   Inserting these inequalities gives
   \begin{align*}
       \psi(s) &\leq \frac{ \log(4)\sqrt{s}}{\sigma(1-s)}-\frac{u_s}{2p}  -\frac{u_s}{1-p}  = \frac{ \log(4)\sqrt{s}}{\sigma(1-s)} -\frac{(1+p)\sqrt{s}}{\sigma (1+s)}\\
       & = \frac{\sqrt{s}}{\sigma}\left( \frac{\log(4)}{1-s} - \frac{1+p}{1+s} \right).
   \end{align*}
   Plugging this back into \eqref{eq:phi prime} gives 
   \begin{align*}
       \phi'(s) & \leq \frac{1}{(1+s)^3} \left((1+s)\log(4) - (1+p)(1-s) \right).
   \end{align*}
   Integrating this inequality from $0$ to $s$, we get 
   \begin{align}
       \phi(s) & \leq \frac{s ((1+s)\log (4)-(1+p))}{(s+1)^2}
   \end{align}
   using the fact that \(\phi(0)=0\). Thus, we find \(\phi(s)<0 \) if \(\log 4< \frac{1+p}{1+s}\). 

    Solving \(p+g_{s}=1\) for $s$ in terms of $p$ gives 
   \begin{align}
       s^*(p)=\frac{1-3 p + 2 \sqrt{p(2p-1)}}{p-1}\label{eq: s star}.
   \end{align}
   Since \(0< s< s^*(p)\), it is enough to show \(\log 4 < \frac{1+p}{1+s^*(p)}\). Since \(s^*(p)\) is a strictly decreasing function for \(p>1/2\) (see \eqref{eq: s star}), if \( \log(4)< \frac{1+p^*}{1+s^*(p^*)}\) for some $p^*$, then $\phi(s) < 0 \quad \forall s \in (0,s^*(p))$ for any $p > p^*$. Let $p^*$ be the solution of \( \log(4)= \frac{1+p}{1+s^*(p)}\). Solving this numerically for \(p^*\), we find that \(p^* \approx .6504\).
   
   Finally we verify that \(s^*(p)\) is a decreasing function of \(p\) for \(p>1/2\) by computing its derivative:  
   \begin{align}
   \frac{d s^*}{dp\hphantom{^*}} & =  \frac{-3 p+2 \sqrt{p (2 p-1)}+1}{(p-1)^2 \sqrt{p (2 p-1)}}, \nonumber\\
   & \leq \frac{ 1-3p +  3p-1}{(p-1)^2 \sqrt{p (2 p-1)}} = 0, \label{eq: ds}
   \end{align}
   where we have applied the AM-GM inequality since \(p> 1/2\).
    \end{proof}

    \begin{proof}[Proof of \Corref{cor:criterion for general shifted bernoulli}]
        Let \(Y=(b-a)X+a\) where \(X\) is $\Bernoulli(p)$. Using \eqref{eq:large deviation rate function definition}, it is easy to see that the rate functions \(I_X\) and \(I_Y\) of \(X\) and \(Y\) satisfy 
        \begin{align*}
            I_Y(s) & = I_X \left( \frac{s-a}{b-a} \right).
        \end{align*}

        Let $m_X,m_Y,\sigma_X,\sigma_Y$ be the means and variances of $X$ and $Y$. Since \(m_Y = m_X (b-a) +a\) and \(\sigma_Y =(b-a) \sigma_X\),
        \begin{align*}
            I_Y\left( m_Y+\frac{2 \sigma_Y\sqrt{s}}{1+s} \right)
            & = I_X \left(  m_X + \frac{2 \sigma_X\sqrt{s}}{1+s} \right),
        \end{align*}
        and Proposition \ref{prop: bernoulli inequality} applies. 
    \end{proof}

\appendix
\section{Existence of Busemann functions}
\label{sec:appendix diff of limit shape}
For any $x \in \directions$, let $H_x = \{ \lambda x \colon \lambda > 0 \}$ be the line beginning at the origin that passes through $x$. Since the time-constant $g$ is $1$-homogeneous, $g'(x)$, if it exists, is constant along any $H_x$. Let $L_x$ be a tangent line of the limit-shape $\limitshape$ that intersects $H_x$; then there exist $x_L,x_R \in (0,1)$ that are the smallest and largest numbers, not necessarily distinct, such that the lines $H_{(x_L,1-x_L)}$ and $H_{(x_R,1-x_R)}$ intersect $L_x \cap \limitshape$. In \lpp{}, when $g$ is differentiable at both endpoints $(x_L,1-x_L)$ and $(x_R,1-x_R)$ \citep{MR3704768}, the limit in \eqref{eq:busemann-fn-definition} is known to exist and produce Busemann functions satisfying properties $(1)-(5)$ in \Defref{def:pre Busemann functions}. 
In \fpp{}, similar results have been proved in \citep{MR3152744}. More recently, in \fpp{}, \citet{2016arXiv160902447A} removed the differentiability requirements at the points \((x_L,1-x_L)\) and \((x_R,1-x_R)\), and showed that there is a unique Busemann function associated with each tangent line of the limit-shape. It is expected that their techniques can be extended to prove a similar result for \lpp{}.

Since the time-constant of \lpp{} is concave, it is not differentiable, in the worst case, on a countable set of points in $\directions$. However, the only available result about differentiability is where the minimum element of the support of the weights is an atom having probability larger than the critical probability for directed percolation. Here, we know that the boundary of the limit-shape is a straight line between two angles $\theta_1 < \theta_2$ that are symmetric about $(1/2,1/2)$ in the positive quadrant ---the so-called percolation cone--- and that the limit-shape is differentiable at the end points $\{\theta_1,\theta_2\}$ \citep{MR3535900}. These differentiability results in \lpp{} are based on earlier work in \fpp{} \citep{durrett_shape_1981,marchand_strict_2002,auffinger_differentiability_2013}.  

\section{Busemann correlations and the KPZ relationship}
\label{sec:appendix busemann correlations and kpz relationship}
For some $u \in \derivatives$, suppose $B^{u}$ is some Busemann function satisfying \eqref{eq:busemann-fn-definition} and the conditions in \Defref{def:pre Busemann functions}. It is expected that
\begin{equation}
    B^{u}(0,N v) = \grad g(u) \cdot Nv + \Theta(N^{1/2}),
    \label{eq:fluctuations of busemann functions}
\end{equation}
for any $v$ that is not parallel to $x$, and $\Theta(x)$ means that the quantity is bounded above and below by a constant times $x$. In the case of exponential or geometric weights, this is known to be true since Busemann increments are i.i.d.\ exponentials or geometrics respectively on any down/right lattice path, and the CLT implies their diffusive behavior \citep{MR2268539}. In \fpp{}, under various unproven hypotheses on the time-constant and the tail-behavior of the passage-time, Gangopadhyay \citep{gangopadhyay_fluctuations_2020} proves a result that suggests \eqref{eq:fluctuations of busemann functions} as well.  

Assuming \eqref{eq:fluctuations of busemann functions}, the following heuristic argument due to Newman, Alexander and others shows that $2 \chi = \xi$ in $d = 2$.  From Johansson's theorem \eqref{eq:gue limit for last passage time}, it follows that it is not unreasonable to expect that in general, for any $u \in \directions$,
\begin{equation}
    G(0,Nu) = N g(u) + \Theta(N^{\chi}).
    \label{eq:order of magnitude of passage-time fluctuations}
\end{equation}

The corrector or recovery property \eqref{eq:busemann function dynamic programming or recovery property} can be used to recover geodesics from Busemann functions. Let 
\begin{equation}
    \alpha(x) = \operatorname*{argmin}_{y \in \{ e_1,e_2\}} B^u(x,x + y)  
    \label{eq:arrow definition}
\end{equation}
be the arrow at $x$. In case of a tie in \eqref{eq:arrow definition}, we may always assume that $\alpha(x) = e_1$. Given any $x \in \Z^2$, we can form an up/right lattice path as follows: let $X_0 = x$, and $X_n = X_{n-1} + \alpha(X_{n-1})$ for $n \geq 1$. It can be shown such paths formed by following arrows always produce geodesics, and these are called \textbf{Busemann geodesics} \citep[eq. (2.14)]{MR3704769}.

The Licea-Newman argument \citep{MR1387641,MR3152744} shows that Busemann geodesics from any two points coalesce almost surely\ \citep[Theorem 4.5]{MR3704769}. Since the geodesics from $0$ to $Nx$ and $N^\xi v$ to $Nx$ fluctuate on the $N^{\xi}$ scale, it is expected that there is a random tight constant $a_N$ such that the geodesics from $0$ and $N^{\xi}v$ would have merged after $a_N N x$ steps. Indeed, this is known to be true in the exponential and geometric cases \cite{MR4002528}. Then, from \eqref{eq:busemann-fn-definition} 
\[
    B^{x}(0,N^{\xi}v) = G(0,a_N N x) - G(0,a_N N x),
\]
for some large enough $N$. Inserting \eqref{eq:order of magnitude of passage-time fluctuations} and \eqref{eq:fluctuations of busemann functions} into the above, we get
\begin{align*}
    \grad g(x) \cdot N^{\xi} v + \Theta(N^{\xi/2}) 
    & = N g(a_N x) - N g(a_N x - N^{\xi - 1} v) + \Theta(N^{\chi}) \\
    & \approx \grad g(a_N x) \cdot N^{\xi} v + \Theta(N^\chi).
\end{align*}
By the 1-homogeneity of $g(u)$, $\grad g(a_N x) = \grad g(x)$, and thus $\xi = 2\chi$.

An initial step towards proving \eqref{eq:fluctuations of busemann functions} is to show
\begin{equation}
    \Var(B^{x}(0,Nv)) \leq O(N).
    \label{eq:order N bound for variance of busemann}
\end{equation}
If the covariance inequalities in \eqref{eq:negative correlation of busemann functions conjecture-0} and \eqref{eq:negative correlation of busemann functions conjecture} hold, we have
\begin{align}
    \Var(B^{x}(0,Nv)) 
    & = \Var\left( \sum_{i=0}^{N-1} B^{x}(v_j, v_{j+1}) \right) \nonumber\\
    & = \sum_{i=0}^{N-1} \Var\left( B^{x}(v_i, v_{i+1})\right) + 2\sum_{i < j} \Cov(B^{x}(v_i, v_{i+1}),B^{x}(v_j, v_{j+1}) ) \nonumber\\
    & \leq \Var(B^{x}(0,v)) N, \label{eq:variance of a busemann increment}
\end{align}
where the first equality follows from additivity and the last uses the stationarity of Busemann functions. If only \eqref{eq:order N bound for variance of busemann} is available, the heuristic gives $\xi \geq 2 \chi$.

\subsection*{Acknowledgments}   
We would like to thank T.\ Sepp{\"a}l{\"a}inen for sharing a short note that connects zero correlations of adjacent Busemann increments and the universal exponential limit-shape function \eqref{eq:exponential-lpp-limit-shape}; M.\ Damron for suggesting the use of coarse graining to improve the union bound in \eqref{eq:naive set}; C.\,Janjigian for pointing out that pre-Busemann functions always exist; and M.\,Hegde for sharing his simulation of the $\chi^2$ distribution and showing us a better way to present our simulations. A.\,Krishnan would like to acknowledge support from a Simons Collaboration Grant 638966.
\printbibliography


\end{document}